\newcommand{\IR}{\mathbb{R}}
\newcommand{\IC}{\mathbb{C}}
\newcommand{\IZ}{\mathbb{Z}}
\newcommand{\IF}{\mathbb{F}}
\newcommand{\NN}{\mathcal{N}}
\newcommand{\RR}{\mathcal{R}}
\renewcommand{\SS}{\mathcal{S}}
\newcommand{\eps}{\varepsilon}
\newcommand{\la}{\lambda}
\newcommand{\td}[1]{\widetilde{#1}}
\DeclareMathOperator{\loc}{loc}
\DeclareMathOperator{\Var}{Var}
\DeclareMathOperator{\Ric}{Ric}
\DeclareMathOperator{\Rm}{Rm}
\newcommand{\rrm}{r_{\Rm}}
\newcommand{\EMPTY}[1]{}
\newtheorem{Theorem}[equation]{Theorem}
\newtheorem{Corollary}[equation]{Corollary}
\newtheorem{Claim}[equation]{Claim}
\theoremstyle{definition}
\theoremstyle{remark}
\newtheorem{Example}[equation]{Example}
\numberwithin{equation}{section}
\title{On the fundamental group of non-collapsed ancient Ricci flows}
\author{Richard H  Bamler}
\address{Department of Mathematics, UC Berkeley, CA 94720, USA}
\email{rbamler@berkeley.edu}
\thanks{This work was supported by NSF grant DMS-1906500.}
\date{\today}
\begin{document}

\begin{abstract}
We show that any manifold admitting a non-collapsed, ancient Ricci flow must have finite fundamental group.
This generalizes what was known for \emph{$\kappa$-solutions} in dimensions 2, 3.
We furthermore show that this fundamental group must be a quotient of the fundamental group of the regular part of any tangent flow at infinity.
\end{abstract}

\maketitle

\section{Introduction}
A central goal in the study of Ricci flow is the study of its singularity formation.
In dimension~3 this was carried out successfully by Perelman \cite{Perelman1} and led to the construction of a Ricci flow with surgery \cite{Perelman2}.
In higher dimensions, progress towards this goal was recently obtained by the author in \cite{Bamler_HK_entropy_estimates,Bamler_RF_compactness, Bamler_HK_RF_partial_regularity}, where singularities were characterized by (possibly singular) gradient shrinking solitons that arise as blow-up models along specific sequences of points and scales.
In the same work it was shown that it is --- in fact --- possible to take blow-up limits along \emph{any} sequence of points and scales and that this limit is given by a non-collapsed, ancient Ricci flow with a possible singular set of codimension $\geq 4$.
In order to obtain a characterization of the part of the manifold that becomes singular, it becomes necessary to study these non-collapsed, ancient flows in more detail. 
A better understanding of these flows may allow us in the future to perform a successful surgery construction and possibly derive useful topological consequences on the underlying manifold.

This short paper is the beginning of study of non-collapsed, ancient Ricci flows.
In dimension~3, such flows are \emph{$\kappa$-solutions,} which arose in Perelman's work \cite{Perelman2} and are now fully classified \cite{Brendle_2020_Bryant, Brendle_Daskalopoulos_Sesum_2020}.
In higher dimensions, however, very little has been known about such flows. 

In this paper we will focus, for simplicity, on non-collapsed ancient flows that are \emph{non-singular} and have \emph{complete time-slices} and \emph{bounded curvature on compact time-intervals.}
This regularity assumption was considered frequently in dimension~3, for example in the study of $\kappa$-solutions where it turned out to be natural. 
We also remark that in forthcoming work, we will establish a more general theory of \emph{non-compact, singular} flows and our techniques will readily generalize to such flows.

Let us now summarize our results.
We will show that the underlying manifold of any non-collapsed, ancient flow with complete time-slices and bounded curvature on compact time-intervals has finite fundamental group.
This generalizes what has been known to be true for $\kappa$-solutions in dimension~2, 3.
We will also show that this fundamental group must be the quotient of the fundamental group of the regular part of any tangent flow at infinity, where the latter is given by a singular gradient shrinking soliton.
This implies further restrictions on the fundamental group of the flow in certain cases.
Lastly, we show that any tangent flow at infinity is a quotient of the corresponding tangent flow at infinity of the the universal covering flow and derive an identity on the order of the fundamental group of the original flow in terms of the Nash entropies at infinity of both flows.

In forthcoming work, we will study non-collapsed, ancient flows in more detail and characterize their geometry at (spatial) infinity more precisely.
\bigskip

Recall that an ancient Ricci flow $(M, (g_t)_{t \leq 0})$ with complete time-slices and bounded curvature on compact time-intervals is called \emph{non-collapsed} if the following equivalent equivalent conditions are satisfied \cite{Chan_Ma_Zhang_2021_sobolev}:
\begin{enumerate}
\item We have the following entropy bound for some uniform $Y$
\[ \mu(g_t, \tau) \geq - Y, \qquad \text{for all} \quad t \leq 0, \; \tau > 0. \]
\item We have the entropy bound
\[ \liminf_{\tau \to \infty} \mu(g_{-\tau}, \tau) > -\infty. \]
\item For some (or any) $(x,t) \in M \times \IR_{\leq 0}$ we have the following bound on the pointed Nash entropy
\[ \liminf_{\tau \to \infty} \NN_{x,t}(\tau) > -\infty. \]
\end{enumerate}
Moreover, $(M, (g_t)_{t \leq 0})$ is automatically non-collapsed if it arises as the blow-up limit of a Ricci flow $(M', (g'_t)_{t \in [0,T)})$ on a compact manifold $M'$.
Any such non-collapsed, ancient flow has a tangent cone at infinity, which is given by the flow of a singular gradient shrinking soliton.
Such a soliton is described by a \emph{singular space,} i.e., a tuple $(X, d, \RR_X, g_X, f)$, where $(X,d)$ is a metric length space, $\RR_X \subset X$ is a maximal open subset equipped with a smooth manifold structure and Riemannian metric $g_X$ such that $(X,d)$ is the metric completion of the length structure of $(\RR_X, g_X)$ and $f$ is a smooth potential function such that the gradient shrinking soliton equation
\[ \Ric_{g_X} + \nabla^2 f - \frac12 g_X = 0 \]
holds on $\RR_X$ (see \cite[Definition~2.15]{Bamler_HK_RF_partial_regularity} for further properties).
It was proven in \cite{Bamler_HK_RF_partial_regularity} that the singular set $\SS_X = X \setminus \RR_X$ of this soliton has codimension $\geq 4$ in the Minkowski-sense; moreover, in dimension 4, $(X,d)$ is the length space of a smooth orbifold with isolated singularities.

Let us now state our results.
We first show that any non-collapsed, ancient flow must have finite fundamental group.

\begin{Theorem} \label{Thm_finite_fg}
Let $(M, (g_t)_{t \leq 0})$ be a non-collapsed, ancient Ricci flow that has complete time-slices and bounded curvature on compact time-intervals.
Then $\pi_1 (M)$ is finite.
\end{Theorem}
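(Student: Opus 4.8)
The plan is to pass to the universal cover and exploit non-collapsedness there to force $\pi_1(M)$ to be finite. Let $(\widetilde M, (\widetilde g_t)_{t\le 0})$ be the universal covering flow, with deck group $\Gamma = \pi_1(M)$ acting by isometries on each time-slice. The key structural fact I would use is that the tangent flow at infinity of $(M,(g_t))$ is the flow of a singular gradient shrinking soliton $(X,d,\mathcal R_X,g_X,f)$ whose singular set has codimension $\ge 4$; in particular its regular part is connected and, being the cross-section of a cone-like structure, has a well-behaved (in fact asymptotically conical) geometry that controls volume growth. So the first step is to set up the blow-down: rescale $(\widetilde M, \widetilde g_{-\tau_i})$ by $\tau_i\to\infty$ and extract, using the compactness theory of \cite{Bamler_RF_compactness}, a tangent flow at infinity of the covering flow, together with the limiting action of $\Gamma$ (after suitable basepoint bookkeeping).

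The heart of the argument is a volume/entropy comparison. Since $\Gamma$ acts freely and properly discontinuously on $\widetilde M$ preserving the flow, the pointed Nash entropy of the covering flow based at a lift $\widetilde x$ dominates that of the base flow: $\mathcal N_{\widetilde x,t}(\tau) \ge \mathcal N_{x,t}(\tau)$, with the deficit measuring how much mass of the conjugate heat kernel on $M$ is ``unwrapped'' in $\widetilde M$. If $\Gamma$ were infinite, then for $\tau$ large this deficit would force $\liminf_{\tau\to\infty}\mathcal N_{\widetilde x,t}(\tau) = -\infty$, i.e.\ the covering flow would be collapsed. Concretely: a non-collapsed ancient flow has, at large scales, volume growth comparable to that of its asymptotic soliton cone, and an infinite deck group would produce arbitrarily many disjoint isometric copies of a fixed-size region inside a ball of controlled volume in $\widetilde M$ — a contradiction with the volume lower bound at that scale. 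The technically cleanest route is through the Nash entropy: run the heat-kernel-based estimates of \cite{Bamler_HK_entropy_estimates} on both $M$ and $\widetilde M$, observe that the variance / $\mathcal W$-functional on the cover is at least that on the base plus a term growing with $\#\Gamma$, and conclude that the entropy bound (ii)/(iii) in the definition of non-collapsedness fails upstairs unless $\Gamma$ is finite.

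To make this rigorous I would argue by contradiction and quantify with a fixed generator set: assuming $|\Gamma| = \infty$, pick $N$ elements $\gamma_1,\dots,\gamma_N\in\Gamma$ and a lift $\widetilde x$; for $\tau$ large enough the $H_\tau$-distance separation (in the sense of \cite{Bamler_RF_compactness}) between $\widetilde x$ and the points $\gamma_j\widetilde x$ is bounded in terms of the diameter of a fundamental domain at that scale, while the conjugate heat kernel mass they carry is comparable, so the Nash entropy picks up $\approx -\log N$; letting $N\to\infty$ and then $\tau\to\infty$ violates non-collapsedness of $\widetilde M$ — but $\widetilde M$ is non-collapsed because $M$ is (the entropy bound passes to covers of non-collapsed flows, since $\mu(\widetilde g_t,\tau)\ge \mu(g_t,\tau)$). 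Hence $\Gamma$ is finite.

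The main obstacle I anticipate is the bookkeeping of basepoints and the precise sense in which entropy bounds pass between $M$ and $\widetilde M$ uniformly in the scale $\tau$: one must show that the non-collapsing constant $Y$ for $M$ yields a genuine (scale-independent) lower bound upstairs, and then that an infinite $\Gamma$ genuinely destroys it. Equivalently, one needs a clean statement that the regular part of the tangent flow at infinity of $\widetilde M$ covers that of $M$ with deck group a quotient of $\Gamma$, and that a noncompact such regular part (infinite cover of a fixed singular soliton cross-section with codimension-$\ge4$ singularities) cannot support a shrinking soliton structure with finite entropy — this last point is where the codimension-$\ge 4$ partial regularity from \cite{Bamler_HK_RF_partial_regularity} and the structure theory of \cite{Bamler_RF_compactness} do the real work, and getting the convergence of the $\Gamma$-actions under blow-down to be equivariant and non-degenerate is the delicate step.
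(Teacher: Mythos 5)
Your overall strategy --- pass to the universal cover and bound $\#\Gamma$ by comparing heat-kernel/volume quantities between $M$ and $\td M$ --- is the paper's, and your geometric picture of ``many disjoint deck translates of a fixed-volume region packed into a ball of controlled volume in $\td M$'' is exactly the right mechanism. But the entropy bookkeeping meant to make this rigorous is backwards, and the contradiction you aim for cannot occur. You correctly note that $\NN_{\td x,t}(\tau)\ge \NN_{x,t}(\tau)$ (this follows from $\td K\le K\circ\pi$ together with $\pi_*\td\nu_{\td x,t;s}=\nu_{x,t;s}$), which already shows that $\td M$ is non-collapsed whenever $M$ is. You then claim that an infinite $\Gamma$ would force $\liminf_{\tau\to\infty}\NN_{\td x,t}(\tau)=-\infty$, i.e.\ that the cover becomes collapsed; this directly contradicts the inequality you just established, so ``violating non-collapsedness upstairs'' can never be the source of the contradiction. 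The deficit goes the other way: $\NN_{\td x,0}(\tau)-\NN_{x,0}(\tau)\approx\log\#\Gamma\ge 0$ (compare Corollary~\ref{Cor_pi1_expNN}). What an infinite $\Gamma$ actually violates is the \emph{universal upper bound} $\NN_{\td x,0}(\tau)\le 0$, not the lower bound coming from non-collapsedness.

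Concretely, the paper's proof runs as follows: fix a finite $S\subset\Gamma$, take an $H_n$-center $(\td z,s)$ of $(\td x,0)$ and observe that its projection $(z,s)$ is an $H_n$-center of $(x,0)$; for $|s|$ large the translates $\alpha.\td z$, $\alpha\in S$, all lie within $O(\sqrt{H_n|s|})$ of $\td z$, so every point of $B(z,s,\sqrt{2H_n|s|})$ has at least $\#S$ preimages in $B(\td z,s,10\sqrt{H_n|s|})$. The lower volume bound $|B(z,s,\sqrt{2H_n|s|})|_s\ge c\exp(\NN_{x,0}(-s))\ge c\,e^{-Y}$ (non-collapsedness of the base) and the upper volume bound $|B(\td z,s,10\sqrt{H_n|s|})|_s\le c^{-1}\exp(\NN_{\td x,0}(-s))\le c^{-1}$ (the universal bound on the cover) then give $\#S\le c^{-2}e^{Y}$, hence $\Gamma$ is finite. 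Note also that no blow-down is needed: the tangent flow at infinity, the equivariant convergence of the $\Gamma$-action, and the structure theory of singular solitons --- which you flag as the delicate steps --- are what Theorems~\ref{Thm_pi1_RR_M} and \ref{Thm_GSS_quotient} require, not this theorem; the argument here takes place at a single large scale $|s|$. Your closing appeal to ``an infinite cover of a soliton cross-section cannot carry a finite-entropy soliton structure'' is likewise both unproven and unnecessary.
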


The next theorem relates the fundamental group of the flow with the fundamental group of the regular part $\RR_X$ of any of its tangent flows at infinity.

\begin{Theorem} \label{Thm_pi1_RR_M}
Let $(M, (g_t)_{t \leq 0})$ be a non-collapsed, ancient Ricci flow with complete time-slices and bounded curvature on compact time-intervals. 
Consider a tangent flow at infinity that is given by the singular space  $(X,d,\RR_X,g_X, f)$.
Then there is a surjective homomorphism $\phi : \pi_1 (\RR_X) \to \pi_1 (M)$.

More specifically, for any bounded, open $U \subset \RR_X$ there is a smooth embedding $\psi : U \to M$ such that the following diagram commutes:
\[\begin{tikzcd}
	{\pi_1(U)} \\
	{\pi_1(\mathcal{R}_X)} && {\pi_1(M)}
	\arrow["\phi"', from=2-1, to=2-3]
	\arrow[from=1-1, to=2-1]
	\arrow["{\psi_*}", dashed, from=1-1, to=2-3]
\end{tikzcd}\]
\end{Theorem}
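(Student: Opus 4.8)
\emph{Strategy.} The plan is to read off the maps from the $\mathbb{F}$-convergence of parabolic rescalings to the tangent flow at infinity, together with the codimension bound on $\SS_X$. Fix $(x_0,t_0)\in M\times\IR_{\le 0}$. The given tangent flow at infinity is the $\mathbb{F}$-limit of the rescaled flows $\XX^i:=(M,\la_i^{-1}g_{t_0+\la_i(t-t_0)},\dots)$ (with their conjugate heat kernel measures based at $(x_0,t_0)$) along some $\la_i\to\infty$, the limit being the metric soliton $\XX^\infty$ whose time-$(-1)$ slice is the metric completion of $(\RR_X,g_X)$. By \cite{Bamler_HK_RF_partial_regularity} the set $\SS_X$ has codimension $\ge 4$ and the convergence $\XX^i\to\XX^\infty$ is smooth (in the Cheeger--Gromov sense, within the realizing correspondences) over $\RR_X$. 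Hence for every precompact connected open $U\Subset\RR_X$, equipped with a marked basepoint and frame, and for all $i\ge i(U)$ there is a smooth embedding $\psi_i\colon U\to M$ — into the underlying manifold of the time-$(-1)$ slice of $\XX^i$, which is diffeomorphic to $M$ — with $\psi_i^*(\la_i^{-1}g_{t_0-\la_i})\to g_X$ in $C^\infty_{\loc}(U)$, respecting the marked data through the correspondences. This is the embedding $\psi$ of the theorem; it remains to build $\phi$, verify the triangle, and prove surjectivity.

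\emph{Assembling $\phi$.} Exhaust $\RR_X=\bigcup_k U_k$ by precompact connected open sets $U_k\Subset U_{k+1}$ through a common basepoint, so that $\pi_1(\RR_X)=\varinjlim_k\pi_1(U_k)$. The technical core is a \emph{coherence lemma}: for each $k$ and all $i,j\ge N(k)$ the maps $\psi_i|_{U_k}$ and $\psi_j|_{U_k}$ are homotopic in $M$, via a homotopy tracking basepoints. Indeed, for $i,j$ large one has $\psi_i(U_k)\subset\psi_j(U_{k+1})$, and then $\psi_j^{-1}\circ\psi_i\colon U_k\to U_{k+1}$ is $C^1_{\loc}$-close to the inclusion $U_k\hookrightarrow U_{k+1}$ — both $\psi_i,\psi_j$ are $C^1$-close to the limiting near-isometry onto the corresponding region of $\XX^\infty$, and the marked basepoint and frame pin the composition near the identity — hence it is homotopic to that inclusion through short geodesics; post-composing with $\psi_j$ gives the claim. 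Granting this, fix $i_k\ge N(k)$ increasing and set $\psi^{(k)}:=\psi_{i_k}|_{U_k}$; the lemma shows $(\psi^{(k)})_*$ and $(\psi^{(k')})_*\circ(\mathrm{incl})_*$ agree on $\pi_1(U_k)$ for $k'>k$, so — after using the tracked homotopies to identify the fundamental groups at the varying basepoints — the $(\psi^{(k)})_*$ descend to a homomorphism $\phi\colon\pi_1(\RR_X)\to\pi_1(M)$. For a general bounded open $U$, pick $U\subset U_k$ and take $\psi:=\psi^{(k)}|_U$; the triangle then commutes by construction, the coherence lemma absorbing any relations in $\pi_1(\RR_X)$ coming from disks in larger $U_{k'}$.

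\emph{Surjectivity.} It remains to show $\psi^{(k)}(U_k)\hookrightarrow M$ is $\pi_1$-surjective for $k$ large. Choosing the exhaustion so that $U_k$ corresponds on the soliton to a sublevel set $\{f<R_k\}\cap\RR_X$ with $R_k\to\infty$ exceeding every critical value of $f$, minus a neighborhood $N_{\delta_k}(\SS_X)$ with $\delta_k\to 0$, I would run van Kampen on $M=\psi^{(k)}(U_k)\cup\bigl(M\setminus\psi^{(k)}(U_{k-1})\bigr)$ and show that every loop in the second (``uncaptured'') piece is homotopic in $M$ into $\psi^{(k)}(U_k)$. Near $\SS_X$ this follows from the codimension $\ge 4$ bound, which lets one perturb loops and homotopies off any neighborhood of $\SS_X$ while staying in $\RR_X$, hence (after transport via $\psi^{(k)}$) into $\psi^{(k)}(U_k)$. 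For the spatial-infinity piece $\{f\ge R_{k-1}\}$ one uses that $f$ is proper with $f\sim d^2/4$, so it has no critical points at large levels and its downward gradient flow pushes loops to bounded levels; this must then be transported to $M$ using the near-isometries and the scale-invariant curvature control near the $H_n$-centers $z_\tau$ of the conjugate heat kernel supplied by the partial-regularity theory.

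I expect this last point to be the main obstacle: ruling out that a loop of $M$ is ``lost at spatial infinity'' of the singular limit — that is, that its length under the backward flow grows faster than $\sqrt{\tau}$, or that the relevant homotopies are obstructed when $\SS_X$ is noncompact. A convenient way to organize it is to pass to the universal cover $\widehat M\to M$ with deck group $\Gamma=\pi_1(M)$: a nontrivial $\gamma\in\Gamma$ acts as a fixed-point-free isometry with translation length $\ell_\gamma(\tau)=d_{\widehat g_{t_0-\tau}}(\widehat x_0,\gamma\widehat x_0)$; if $\la^{-1/2}\ell_\gamma(\la)$ stays bounded along $\la\to\infty$, then $\gamma$ is represented by a loop of bounded rescaled length near the center and is captured by the argument above, while the opposite case has to be excluded by showing that the tangent soliton's $\pi_1$ is generated on a compact core — equivalently, that $\widehat M$ carries no geodesic loops escaping to infinity at scale $\sqrt{\tau}$ — again via the asymptotics of $f$. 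If the finiteness of $\pi_1(M)$ from Theorem~\ref{Thm_finite_fg} is already available, one may instead reduce to finitely many generating loops, each a fixed compact subset of $M$, which simplifies the escape analysis.
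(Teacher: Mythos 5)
Your construction of $\phi$ and of the embedding $\psi$ via coherence of the maps $\psi_i$ coming from smooth convergence on $\RR_X$ is a reasonable alternative to the paper's route (which obtains $\phi$ algebraically from a limiting covering map rather than as a direct limit of $(\psi_i)_*$), and the commutativity of the triangle would plausibly follow from it. The genuine gap is in the surjectivity argument, and you have essentially flagged it yourself: your van Kampen decomposition requires showing that every loop in $M\setminus\psi^{(k)}(U_{k-1})$ can be homotoped in $M$ into $\psi^{(k)}(U_k)$, i.e., that no element of $\pi_1(M)$ is ``lost at spatial infinity'' or absorbed near $\SS_X$. Neither the properness of $f$ on the soliton nor the codimension-$4$ bound on $\SS_X$ delivers this: the region $M\setminus\psi^{(k)}(U_{k-1})$ is precisely the part of $M$ that is \emph{not} controlled by the limit soliton (the smooth convergence only sees $\RR_X$), and a priori an essential loop of $M$ could live entirely in the part of the time-$t_i$ slice whose conjugate-heat-kernel measure tends to $0$. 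Your fallback --- reducing to translation lengths $\ell_\gamma(\tau)$ on the universal cover and ``excluding the opposite case via the asymptotics of $f$'' --- restates the unproved assertion rather than proving it.

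The paper closes exactly this gap by taking the tangent flow at infinity of the universal cover $\td M$ along the \emph{same} blow-down scales, obtaining $(\RR_{\td X},g_{\td X},\td f)$, and showing that the deck action of $\Gamma=\pi_1(M)$ passes to the limit as a free isometric action with quotient $\RR_X$; then $\pi'\colon\RR_{\td X}\to\RR_X$ is a normal covering with connected total space and deck group all of $\Gamma$, so $\phi\colon\pi_1(\RR_X)\to\pi_1(\RR_X)/\pi'_*\pi_1(\RR_{\td X})\cong\Gamma$ is surjective by covering space theory alone. The analytic input replacing your escape analysis is the heat kernel identity $K(x,t;y,s)=\sum_{\td y\in\pi^{-1}(y)}\td K(\td x,t;\td y,s)$ combined with the finiteness of $\Gamma$ from Theorem~\ref{Thm_finite_fg}: if $\psi_i(y')$ carries conjugate-heat-kernel measure at least $c$ at scale $r\sqrt{-t_i}$, then some lift carries measure at least $c/\#\Gamma$ on $\td M$, and the regularity-scale and volume estimates then force that lift to lie in the image $\td\psi_i(\td U_i)$ of the regular part of the covering soliton. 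This is the mechanism that prevents points --- and hence loops --- from escaping to infinity, and it is the missing idea in your argument.
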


A direct consequence is the following:

\begin{Corollary} \label{Cor_sc}
If in the setting of Theorem~\ref{Thm_pi1_RR_M} the regular part $\RR_X$ of a tangent flow at infinity is simply-connected, then so is $M$.
\end{Corollary}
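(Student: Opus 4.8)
The plan is to read this off directly from Theorem~\ref{Thm_pi1_RR_M}. That theorem supplies a surjective homomorphism $\phi : \pi_1(\RR_X) \to \pi_1(M)$. If $\RR_X$ is simply-connected, then $\pi_1(\RR_X)$ is the trivial group, so its image under $\phi$ is trivial; since $\phi$ is onto, $\pi_1(M)$ is trivial as well. As $M$ is connected (being a time-slice of the flow, which we take to be connected), a trivial fundamental group means $M$ is simply-connected. So the proof is essentially the elementary observation that any group admitting a surjection from the trivial group is trivial.

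The only point that needs a word is the basepoint bookkeeping. In Theorem~\ref{Thm_pi1_RR_M} the homomorphism $\phi$ arises from the smooth embeddings $\psi : U \hookrightarrow M$ of bounded open subsets $U \subset \RR_X$; one fixes a basepoint $p \in U$ and uses $\psi(p)$ as basepoint in $M$, so that $\phi$ is basepoint-preserving by construction. Since $M$ is connected, triviality of $\pi_1(M,\psi(p))$ is equivalent to triviality of $\pi_1(M)$ with respect to any basepoint, so there is nothing further to check. No obstacle is expected here — this is a one-line corollary of the surjectivity in Theorem~\ref{Thm_pi1_RR_M}, and in particular it does not require the finiteness statement of Theorem~\ref{Thm_finite_fg}.

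It may be worth remarking that, more generally, since $\pi_1(M)$ is always a quotient of $\pi_1(\RR_X)$, any geometric input forcing $\pi_1(\RR_X)$ to vanish transfers to $M$: for instance if $\RR_X$ is a smooth simply-connected manifold, or if the tangent flow at infinity is a metric cone over a simply-connected link (so that, the singular set $\SS_X$ having codimension $\geq 4$, one has $\pi_1(\RR_X)\cong\pi_1(X)=1$), then $M$ is simply-connected.
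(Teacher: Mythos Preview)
Your proof is correct and matches the paper's approach exactly: the paper simply states that Corollary~\ref{Cor_sc} is a direct consequence of Theorem~\ref{Thm_pi1_RR_M}, which is precisely the surjectivity argument you give. Your basepoint discussion and final remark are fine but unnecessary elaborations on what is, as you say, a one-line observation.
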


So, for example, if the tangent flow at infinity is cylindrical, $S^{k \geq 2} \times \IR^{n-k}$, then $M$ must be simply-connected itself.

The following two examples show that the statement of Theorem~\ref{Thm_pi1_RR_M} is optimal, i.e., that we cannot expect $\phi$ to be an isomorphism.

\begin{Example}
Consider the constant flow on a Ricci flat, simply connected, 4-dimensional ALE-space $M$.
Its tangent flow at infinity is $\IR^4/ \Gamma$ for some $\Gamma \subset SO(3)$, so the fundamental group of the regular part $\RR_X$ is non-trivial, while $M$ is simply-connected.
This shows that the homomorphism $\phi$ in Theorem~\ref{Thm_pi1_RR_M} need not be injective.
\end{Example}

\begin{Example}
In \cite{Appleton:2017xy} Appleton constructs a family of non-collapsed,  4-dimensional steady solitons on the total space $M_k$ of the complex line bundle $\mathcal{O}(k)$, $k \geq 3$, over $\IC P^1$.
The end of each such soliton is diffeomorphic to $\IR_{+} \times S^3/\IZ_k$ and the metric is asymptotic to a quotient of the Bryant soliton.
It follows that the tangent flow at infinity is the quotient of the cylinder $\IR \times S^3/\IZ_k$; so it has non-trivial fundamental group, despite $M_k$ being simply-connected.
This shows that even if $\RR_X = X$ the homomorphism $\phi$ in Theorem~\ref{Thm_pi1_RR_M} need not be injective.
\end{Example}

Note, however, that we have the following:

\begin{Corollary} \label{Cor_Omega}
If in the setting of Theorem~\ref{Thm_pi1_RR_M} there is a compact domain $\Omega \subset \RR_X$ such that $\pi_1(\Omega) \to \pi_1(\RR_X)$ is an isomorphism and $\partial \Omega$ consists of simply-connected components, then the map $\phi : \pi_1(\RR_X) \to \pi_1(M)$ is an isomorphism.
\end{Corollary}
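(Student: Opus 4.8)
The plan is to deduce the injectivity of $\phi$ (surjectivity is already provided by Theorem~\ref{Thm_pi1_RR_M}) by lifting the embedding $\psi$ furnished by that theorem to the universal cover of $M$, which is a \emph{finite} cover by Theorem~\ref{Thm_finite_fg}. First I would fix a bounded open neighbourhood $U \subset \RR_X$ of $\Omega$ that deformation retracts onto $\Omega$; such a $U$ exists because $\Omega$ is a compact smooth domain and hence admits a collar. The inclusion $\Omega \hookrightarrow \RR_X$ factors through $U$, so the inclusion-induced map $j_* : \pi_1(U) \to \pi_1(\RR_X)$ appearing in Theorem~\ref{Thm_pi1_RR_M}, composed with the isomorphism $\pi_1(\Omega)\xrightarrow{\sim}\pi_1(U)$, equals the hypothesized isomorphism $\pi_1(\Omega)\xrightarrow{\sim}\pi_1(\RR_X)$; hence $j_*$ is an isomorphism. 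Applying Theorem~\ref{Thm_pi1_RR_M} to this $U$ gives a smooth embedding $\psi : U \to M$ with $\psi_* = \phi \circ j_*$; since $j_*$ is an isomorphism and $\phi$ is surjective, $\psi_*$ is surjective, and $\phi$ is an isomorphism if and only if $\psi_*$ is injective. It therefore suffices to show $\ker \psi_* = 1$.

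Next I would pass to the universal covering $q : \td M \to M$, which by Theorem~\ref{Thm_finite_fg} is a finite covering of degree $N := |\pi_1(M)|$. Restricting $\psi$ to $\Omega$ and setting $K := q^{-1}(\psi(\Omega))$, the set $K$ is a compact codimension-$0$ submanifold with boundary of $\td M$, with $\partial K = q^{-1}(\psi(\partial\Omega))$. The monodromy of the covering $q|_K : K \to \psi(\Omega)$ is obtained from the regular $\pi_1(M)$-action on a fibre of $q$ by pulling back along $(\psi|_\Omega)_* = \psi_* \circ (\pi_1(\Omega)\xrightarrow{\sim}\pi_1(U))$; since $\psi_*$ is surjective, this action is transitive, so $K$ is connected, and since the $\pi_1(M)$-action is free, the stabilizer computation identifies $\pi_1(K)$ with $\ker\big((\psi|_\Omega)_*\big) \cong \ker\psi_*$. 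Thus it is enough to prove that $\pi_1(K) = 1$.

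The point where the hypotheses on $\partial\Omega$ enter is the following: each component of $\partial\Omega$ is simply connected, hence so is each component of $\psi(\partial\Omega) \cong \partial\Omega$, and therefore the restriction of the covering $q$ over each such component is trivial; consequently $\partial K$ is a disjoint union of simply connected, two-sided, closed hypersurfaces in the simply connected manifold $\td M$. Now a standard innermost-disk argument finishes the proof: given a loop $\gamma$ in $\Int K$, it bounds a map of a disk into $\td M$; after perturbing this map to be transverse to $\partial K$, the preimage of $\partial K$ is a finite disjoint union of circles, and an innermost such circle maps into a simply connected component of $\partial K$, hence can be capped off within $\partial K$ and pushed to one side, strictly reducing the number of intersection circles. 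Iterating, we may assume the disk misses $\partial K$ altogether, so it lies in $\Int K$, showing $\gamma$ is null-homotopic in $K$. Hence $\pi_1(K) = 1$, so $\ker\psi_* = 1$ and $\phi$ is an isomorphism.

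I expect the only genuinely delicate point to be this last step, and in particular carrying it out correctly when $\partial\Omega$ (hence $\partial K$) is disconnected. One may instead invoke van Kampen's theorem along $\partial K$: since all components of $\partial K$ are simply connected, $\pi_1(\td M)$ is a free product in which $\pi_1(K)$ appears as a free factor, and since $\pi_1(\td M) = 1$ while a nontrivial free product is nontrivial, this forces $\pi_1(K) = 1$ regardless of the number of components of $\partial K$. The remaining ingredients — the collar, transversality, and the monodromy/stabilizer bookkeeping — are routine.
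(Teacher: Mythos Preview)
Your argument is correct and complete; the covering-space bookkeeping, the identification $\pi_1(K)\cong\ker\psi_*$, and the innermost-disk step are all sound. One small remark on your Van Kampen alternative: the statement that $\pi_1(K)$ is a free factor of $\pi_1(\td M)$ tacitly uses that each component of $\partial K$ separates $\td M$ (otherwise the graph-of-groups description would contribute an extra free group). This is automatic here because $\td M$ is simply connected, so a non-separating two-sided closed hypersurface would yield a loop with nonzero algebraic intersection number; you might want to say this explicitly.

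Your route differs from the paper's. The paper argues directly in $M$: since $\pi_1(M)$ is finite, each boundary component of $\psi(\Omega)$ must separate $M$ (a non-separating two-sided closed hypersurface would produce a surjection $\pi_1(M)\to\IZ$), and then Van Kampen over these simply-connected separating hypersurfaces exhibits $\pi_1(\psi(\Omega))$ as a free factor of $\pi_1(M)$, so the inclusion-induced map is injective; combined with the surjectivity from Theorem~\ref{Thm_pi1_RR_M} this gives the isomorphism. You instead pass to the finite universal cover, where simple connectivity makes the separating property and the Van Kampen/innermost-disk step transparent, at the cost of the extra covering-space translation $\pi_1(K)\cong\ker\psi_*$. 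Both approaches ultimately hinge on Theorem~\ref{Thm_finite_fg} and on cutting along simply-connected hypersurfaces; the paper's version is more economical, while yours is more self-contained and makes the topological input (why finiteness of $\pi_1$ matters) very explicit.
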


So, for example, if the tangent flow at infinity is $(S^{n-1} \times \IR) / \IZ_2$, where $\IZ_2$ acts non-trivially on both factors, then $\pi_1(M) \cong \IZ_2$.

The last theorem states that the tangent of the universal cover $\td M$ can be viewed as a normal covering over of the tangent flow of $M$; this covering corresponds to the kernel of $\phi$.

\begin{Theorem} \label{Thm_GSS_quotient}
Let $(M, (g_t)_{t \leq 0})$ be a non-collapsed, ancient Ricci flow with complete time-slices and bounded curvature on compact time-intervals. 
Consider the universal cover $\pi : \td M \to M$, equipped with the pull-back Ricci flow $\td g_t := \pi^* g_t$ and the action of $\Gamma := \pi_1(M)$ by deck transformations.
Let $(\td X,\td d,\RR_{\td X},g_{\td X}, \td f)$ be its tangent flow at infinity of $(\td M, (\td g_t)_{t \leq 0})$, taken with respect to the same sequence of blow-down scales.
Then there is an isometric action of $\Gamma$ on $(X,d)$ that preserves $\RR_{\td X}$, $g_{\td X}$, $\td f$ and such that  $(\RR_X,g_X,f)$ is isometric to $(\RR_{\td X}, g_{\td X}, \td f - \log \#\Gamma) / \Gamma$.
The image of $\pi_1(\RR_{\td X})$ under the projection map is equal to the kernel of $\phi$, so the following short exact sequence holds:
\[ 1  \xrightarrow{\quad} \pi_1(\RR_{\td X}) \xrightarrow{\quad} \pi_1(\RR_X) \xrightarrow{\;\;\phi \;\;} \pi_1(M)  \xrightarrow{\quad}  1. \]
\end{Theorem}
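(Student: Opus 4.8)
The plan is to pass from the flow $(M,(g_t))$ to its universal cover $(\td M,(\td g_t))$, which is again a non-collapsed ancient flow with complete time-slices and bounded curvature on compact time-intervals, the non-collapsing constant being preserved up to the factor $\#\Gamma$ (note $\Gamma$ is finite by Theorem~\ref{Thm_finite_fg}, so $\mu(\td g_t,\tau)=\mu(g_t,\tau)-\log\#\Gamma$). First I would recall that the tangent flow at infinity of either flow is obtained as an $\mathbb{F}$-limit of rescalings $\{(\td M,(\lambda_i^{-2}\td g_{\lambda_i^2 t}))\}$ along a fixed sequence $\lambda_i\to\infty$; since $\pi:\td M\to M$ is a local isometry, these rescaled flows on $\td M$ cover the rescaled flows on $M$, and the deck action of $\Gamma$ commutes with all the rescalings. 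I would then invoke the equivariant compactness theory from \cite{Bamler_RF_compactness} (or a standard equivariant upgrade of the $\mathbb{F}$-convergence there): passing to a further subsequence, the $\Gamma$-actions converge to an isometric action of $\Gamma$ on the limiting metric space $(\td X,\td d)$, and, because $\#\Gamma<\infty$ and convergence is smooth on the regular parts, this limiting action is free and properly discontinuous on $\RR_{\td X}$, preserves $g_{\td X}$, and — using that the potential $\td f$ of a tangent soliton is determined by the Nash-entropy-type limit density, which is $\Gamma$-invariant up to the additive constant $\log\#\Gamma$ coming from the volume ratio of the covering — preserves $\td f-\log\#\Gamma$. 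Dividing, $(\RR_{\td X}/\Gamma,\,g_{\td X},\,\td f-\log\#\Gamma)$ is a smooth soliton; I would identify it with $(\RR_X,g_X,f)$ by observing that the quotient of the $\mathbb{F}$-limit of the $\td g$-flows is the $\mathbb{F}$-limit of the $g$-flows along the same scales (uniqueness of limits), so the two soliton structures agree. This gives the first assertion: an isometric $\Gamma$-action on $\RR_{\td X}$ with $(\RR_X,g_X,f)\cong(\RR_{\td X},g_{\td X},\td f-\log\#\Gamma)/\Gamma$.

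For the exactness statement, the covering $\RR_{\td X}\to\RR_X$ is a normal covering with deck group $\Gamma$, so standard covering-space theory yields a short exact sequence
\[ 1\longrightarrow\pi_1(\RR_{\td X})\longrightarrow\pi_1(\RR_X)\xrightarrow{\ q\ }\Gamma\longrightarrow1, \]
where $q$ is the monodromy. What remains is to prove $\ker\phi=\pi_1(\RR_{\td X})$, equivalently that $\phi$ factors through $q$ via an isomorphism $\Gamma\to\pi_1(M)$ — and since $\Gamma=\pi_1(M)$ by definition, that this induced map is the identity (or at least an isomorphism). I would do this by tracking a loop through the geometric embedding produced in Theorem~\ref{Thm_pi1_RR_M}: apply that theorem simultaneously to $M$ (with a bounded open $U\subset\RR_X$) and to $\td M$ (with $\td U\subset\RR_{\td X}$ a connected component of the preimage of $U$ under $\RR_{\td X}\to\RR_X$), obtaining embeddings $\psi:U\to M$ and $\td\psi:\td U\to\td M$ that are compatible with the coverings, i.e. $\pi\circ\td\psi=\psi\circ(\text{covering map }\td U\to U)$ up to homotopy, because the embeddings are built from the same $\mathbb{F}$-convergence data and the deck actions are compatible on both levels. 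Then for a loop $\gamma$ in $\RR_X$ supported in $U$, $\phi([\gamma])=\psi_*[\gamma]$, and this is trivial in $\pi_1(M)$ iff the lift of $\psi\circ\gamma$ to $\td M$ is a loop, iff the lift of $\gamma$ to $\RR_{\td X}$ is a loop, iff $q([\gamma])=1$, iff $[\gamma]\in\pi_1(\RR_{\td X})$; a limiting argument over an exhaustion of $\RR_X$ by such $U$ upgrades this to all of $\pi_1(\RR_X)$. This shows $\ker\phi=\pi_1(\RR_{\td X})$ and, combined with surjectivity of $\phi$ from Theorem~\ref{Thm_pi1_RR_M}, gives the desired four-term exact sequence.

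The main obstacle, I expect, is making the equivariance rigorous: the $\mathbb{F}$-distance convergence in \cite{Bamler_RF_compactness} is a priori only a statement about metric-measure-space limits along correspondences, and one must show that a finite group of metric-flow isometries converges (after passing to a subsequence) to a genuine isometric action on the limit, that this action is smooth and free on the regular part, and — most delicately — that the resulting potential function matches $\td f-\log\#\Gamma$ with the correct normalizing constant. The constant $\log\#\Gamma$ is forced by the relation $\mathcal{N}_{\td x,t}(\tau)=\mathcal{N}_{x,t}(\tau)+\log\#\Gamma+o(1)$ as $\tau\to\infty$ for lifts $\td x$ of $x$ (the conjugate heat kernel on $\td M$ being, in the large-scale limit, $\#\Gamma$ copies of that on $M$), together with the identification of $f$ on a tangent soliton as the limiting potential normalized so that $\int e^{-f}=(4\pi)^{n/2}$ on the appropriate weighted sense; I would isolate this bookkeeping as a lemma. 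A secondary technical point is the compatibility (up to homotopy) of the two applications of Theorem~\ref{Thm_pi1_RR_M} across the covering, which requires choosing the point-selection and the embedding construction in that theorem's proof functorially with respect to local isometries — something that should hold by inspection of that proof but must be checked.
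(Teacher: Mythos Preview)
Your high-level strategy is the same as the paper's: pass to the universal cover (finite by Theorem~\ref{Thm_finite_fg}), take the two tangent flows along the same scales, produce a limiting covering map $\RR_{\td X}\to\RR_X$ together with a limiting $\Gamma$-action, and read off the short exact sequence from covering-space theory applied to this limiting cover. Your description of the exactness argument via lifting loops through the embeddings of Theorem~\ref{Thm_pi1_RR_M} also matches what the paper does.

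The substantive difference is in how the equivariant limit is obtained. You propose to ``invoke the equivariant compactness theory from \cite{Bamler_RF_compactness} (or a standard equivariant upgrade of the $\mathbb{F}$-convergence there)'', and you correctly flag this as the main obstacle. The paper does \emph{not} have such a black box available and instead proves this step by hand, and that argument is really the content of the proof. Concretely, the paper fixes the smooth comparison embeddings $\psi_i:U_i\to M$ and $\td\psi_i:\td U_i\to\td M$ coming from $\mathbb{F}$-convergence, and then establishes four claims using the curvature-scale/heat-kernel-measure criterion (Claim~\ref{Cl_in_psi_image}): (i) $\pi(\td\psi_i(\td U_j))\subset\psi_i(U_i)$ for large $i$, so that $\chi_i:=\psi_i^{-1}\circ\pi\circ\td\psi_i$ is defined; (ii) each $\chi_i(\td y')$ is precompact in $\RR_X$; (iii) surjectivity onto $\RR_X$; (iv) $\alpha.\td\psi_i(\td U_j)\subset\td\psi_i(\td U_i)$ and the conjugate heat kernel densities at $\td\psi_i(y')$ and $\alpha.\td\psi_i(y')$ agree in the limit. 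Arzel\`a--Ascoli then gives $\chi_i\to\pi'$ in $C^\infty_{\loc}$ and convergence of the $\Gamma$-action; freeness of the limiting action is argued via a center-of-mass construction using finiteness of $\Gamma$. The key analytic input in all of this is the lower bound on $\nu_{x,0;t_i}(B(y_i,t_i,r\sqrt{-t_i}))$ together with $\nu_{x,0;t_i}(\psi_i(U_i))\to1$, which forces points with controlled curvature scale to lie in the image of $\psi_i$. Your proposal does not supply any of this mechanism.

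One smaller point: your derivation of the constant $\log\#\Gamma$ via a Nash-entropy comparison is more roundabout than necessary. The paper simply observes that $\td f$ is $\Gamma$-invariant in the limit (this is exactly \eqref{eq_alph_K_K}), hence factors through $\pi'$, and then the normalization $\int_{\RR_X}(4\pi)^{-n/2}e^{-f}\,dg_X = \int_{\RR_{\td X}}(4\pi)^{-n/2}e^{-\td f}\,dg_{\td X}=1$ forces the additive shift.
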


As a corollary, we obtain the following characterization of the order of the fundamental group.

\begin{Corollary} \label{Cor_pi1_expNN}
Let $(M, (g_t)_{t \leq 0})$ be a non-collapsed, ancient Ricci flow with complete time-slices and bounded curvature on compact time-intervals. 
Consider the universal cover $\pi : \td M \to M$, equipped with the pull-back Ricci flow $\td g_t := \pi^* g_t$.
Denote by $\NN(\infty), \td\NN(\infty)$ the pointed Nash entropies at infinity of the respective flows.
Then
\[ \# \pi_1(M) = \exp \big(\td\NN(\infty) - \NN(\infty) \big) . \]
\end{Corollary}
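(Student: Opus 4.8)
The plan is to derive this directly from Theorem~\ref{Thm_GSS_quotient}, once we recall how the pointed Nash entropy at infinity is read off from the tangent flow at infinity. Recall (see \cite{Bamler_HK_RF_partial_regularity} and the structure theory of non-collapsed, ancient flows) that if the tangent flow at infinity is given by the singular space $(X,d,\RR_X,g_X,f)$, then the conjugate heat kernel measure of this metric soliton, restricted to the relevant time-slice, is $d\nu_X := (4\pi)^{-n/2} e^{-f}\,dg_X$; this is a probability measure on $\RR_X$, and since the pointed Nash entropy of a metric soliton is, by self-similarity, independent of the time-slice at which it is evaluated, $\NN(\infty)$ equals this constant value,
\begin{equation*}
\NN(\infty) = \int_{\RR_X} f \, d\nu_X - \frac n2 ,
\end{equation*}
in particular independently of the basepoint. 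Applying the same to $(\td M, (\td g_t)_{t\le 0})$ with tangent flow at infinity $(\td X,\td d,\RR_{\td X},g_{\td X},\td f)$ gives $\td\NN(\infty) = \int_{\RR_{\td X}}\td f\,d\nu_{\td X} - \tfrac n2$, where $d\nu_{\td X} := (4\pi)^{-n/2}e^{-\td f}\,dg_{\td X}$ is again a probability measure.

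Next I would feed in Theorem~\ref{Thm_GSS_quotient}. Write $\Gamma = \pi_1(M)$ and $N := \#\Gamma$, which is finite by Theorem~\ref{Thm_finite_fg}. The theorem identifies $\RR_X$ isometrically with $\RR_{\td X}/\Gamma$ in such a way that, denoting by $q : \RR_{\td X} \to \RR_X$ the projection, we have $f\circ q = \td f - \log N$, hence $e^{-f}\circ q = N\, e^{-\td f}$. Since the $\Gamma$-action on $(\RR_{\td X}, g_{\td X})$ is isometric and its non-free locus (a finite union of fixed-point sets of nontrivial elements, each a proper totally geodesic submanifold) is $g_{\td X}$-null, the covering-space integration formula $\int_{\RR_X} h\,dg_X = \tfrac 1N\int_{\RR_{\td X}} (h\circ q)\,dg_{\td X}$ holds for every Borel function $h \ge 0$.

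Finally, I would combine these:
\begin{align*}
\NN(\infty) + \frac n2
&= \int_{\RR_X} f\,(4\pi)^{-n/2} e^{-f}\,dg_X
= \frac 1N \int_{\RR_{\td X}} (\td f - \log N)\,(4\pi)^{-n/2}\, N\, e^{-\td f}\,dg_{\td X} \\
&= \int_{\RR_{\td X}} \td f \, d\nu_{\td X} \;-\; (\log N)\int_{\RR_{\td X}} d\nu_{\td X}
= \td\NN(\infty) + \frac n2 - \log N ,
\end{align*}
using in the last equality that $\nu_{\td X}$ is a probability measure. Hence $\log\#\pi_1(M) = \log N = \td\NN(\infty) - \NN(\infty)$, that is, $\#\pi_1(M) = \exp(\td\NN(\infty) - \NN(\infty))$. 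Running the same computation with the factor $f$ deleted reproduces $\nu_X(\RR_X) = \nu_{\td X}(\RR_{\td X}) = 1$, which is a useful consistency check that the shift $-\log N$ in Theorem~\ref{Thm_GSS_quotient} is correctly normalized.

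The one step I expect to require genuine care is the first: making precise that $\NN(\infty)$ equals $\int_{\RR_X} f\,d\nu_X - \tfrac n2$ with exactly the normalization of $f$ appearing in Theorem~\ref{Thm_GSS_quotient} --- equivalently, that the conjugate heat kernel measure of the tangent flow at infinity has total mass $1$ in that normalization, and that the blow-down limit of pointed Nash entropies indeed converges to this quantity. Once the normalizations are aligned, the remainder is the short computation above; the non-free locus of the $\Gamma$-action causes no trouble since it is a null set.
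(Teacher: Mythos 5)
Your proposal is correct and takes essentially the same route as the paper, whose entire proof is the one-line remark that the corollary ``follows from Theorem~\ref{Thm_GSS_quotient} and the fact that the pointed Nash entropies pass to the limit'': you make precise exactly these two ingredients, namely $\NN(\infty)=\int_{\RR_X} f\,d\nu_X-\tfrac n2$ (and likewise for $\td M$) together with the identification $f\circ q=\td f-\log\#\Gamma$ from Theorem~\ref{Thm_GSS_quotient}. (Minor remark: the worry about the non-free locus is moot, since the proof of Theorem~\ref{Thm_GSS_quotient} shows the limiting $\Gamma$-action on $\RR_{\td X}$ is free, so $q$ is an honest $\#\Gamma$-fold covering.)
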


Note that this implies $\# \pi_1(M) \leq \exp \big({-\NN(\infty)})$.
\bigskip

\section{Proofs}
We will use the results from \cite{Bamler_HK_entropy_estimates,Bamler_RF_compactness, Bamler_HK_RF_partial_regularity}; these also hold for non-compact flows with complete time-slices and bounded curvature on compact time-intervals, as explained in Appendix~\ref{appendix}.

In the following, $(M,(g_t)_{t \leq 0})$ will denote a non-collapsed, ancient flow with complete time-slices and bounded curvature on compact time-intervals and $\pi : \td M \to M$ will denote the universal cover of $M$, equipped with the pull-back Ricci flow $\td g_t := \pi^* g_t$.
Recall that $\Gamma := \pi_1(M)$ acts on $\td M$ via deck transformations.
The conjugate heat kernels $K, \td K$ on $M, \td M$, respectively satisfy for $x = \pi (\td x), y \in M$, $\td x \in \td M$
\begin{equation} \label{eq_sum_formula_K}
 K(x,t;y,s) = \sum_{\td y \in \pi^{-1}(x)} \td K(\td x,t;\td y,s). 
\end{equation}
This sum converges due to \cite[Theorem~26.25]{Chow_book_series_Part_III}.
\bigskip

We first prove Theorem~\ref{Thm_finite_fg}.

\begin{proof}[Proof of Theorem~\ref{Thm_finite_fg}.]
Fix some basepoints $x = \pi (\td x) \in M$, $\td x \in \td M$ and write $\td x_\alpha := \alpha . \td x$ for $\alpha \in \Gamma$.
Let $s < 0$ be a constant, which we will send to $-\infty$ later.  
Choose an $H_n$-center $(\td z, s)$ of $(\td x, 0)$ (see \cite[Definition~3.10]{Bamler_HK_entropy_estimates}).
Write again $\td z_\alpha := \alpha. \td z$ for $\alpha \in \Gamma$.

\begin{Claim}
$(z,s) := (\pi(\td z),s)$ is an $H_n$-center of $(x,0)$.
\end{Claim}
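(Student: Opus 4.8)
The plan is to unwind the definition of $H_n$-center using the variance characterization and the summation formula \eqref{eq_sum_formula_K}. Recall that $(\td z, s)$ being an $H_n$-center of $(\td x, 0)$ means that $\Var_s(\delta_{\td z}, \td\nu_{\td x, 0; s}) \leq H_n |s|$, where $\td\nu_{\td x, 0; s}$ is the conjugate heat kernel measure based at $(\td x, 0)$ evaluated at time $s$, i.e. $d\td\nu_{\td x,0;s} = \td K(\td x, 0; \cdot\,, s)\, d\td g_s$. I want to show the analogous bound on $M$: $\Var_s(\delta_z, \nu_{x,0;s}) \leq H_n |s|$, where $d\nu_{x,0;s} = K(x,0;\cdot\,,s)\, dg_s$. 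The key geometric fact is that $\pi$ is a Riemannian covering, so it is a local isometry, and hence $\dist_{g_s}(\pi(\td a), \pi(\td b)) \leq \dist_{\td g_s}(\td a, \td b)$ for all $\td a, \td b \in \td M$; in particular $\dist_{g_s}(z, \pi(\td y)) \leq \dist_{\td g_s}(\td z, \td y)$ for every $\td y$.

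First I would use \eqref{eq_sum_formula_K} to write, for any $y \in M$,
\[ \int_M \dist_{g_s}^2(z, y)\, d\nu_{x,0;s}(y) = \sum_{\td y \in \pi^{-1}(x)} \int_{M} \dist_{g_s}^2(z, \pi(\td y))\, \td K(\td x, 0; \td y, s)\, \cdots \]
— more precisely, since $\pi$ restricted to a fundamental domain is a bijection onto $M$ (up to measure zero) and is a local isometry, I would pull the integral over $M$ back to an integral over $\td M$: for any nonnegative function $h$ on $M$,
\[ \int_M h(y)\, d\nu_{x,0;s}(y) = \int_{\td M} h(\pi(\td y))\, d\td\nu_{\td x, 0; s}(\td y), \]
which follows from \eqref{eq_sum_formula_K} by summing the contributions of the sheets $\{\td x_\alpha\}$ and using $\pi^* g_s = \td g_s$. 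Applying this with $h(y) = \dist_{g_s}^2(z, y)$ and then the contraction $\dist_{g_s}(z, \pi(\td y)) \leq \dist_{\td g_s}(\td z, \td y)$ gives
\[ \int_M \dist_{g_s}^2(z, y)\, d\nu_{x,0;s}(y) = \int_{\td M} \dist_{g_s}^2(z, \pi(\td y))\, d\td\nu_{\td x,0;s}(\td y) \leq \int_{\td M} \dist_{\td g_s}^2(\td z, \td y)\, d\td\nu_{\td x,0;s}(\td y) = \Var_s(\delta_{\td z}, \td\nu_{\td x,0;s}) \leq H_n |s|. \]
Since $\Var_s(\delta_z, \nu_{x,0;s})$ is exactly the left-hand side, this proves $(z,s)$ is an $H_n$-center of $(x,0)$.

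The main thing to be careful about — the only real obstacle — is the change-of-variables identity $\int_M h\, d\nu_{x,0;s} = \int_{\td M} (h\circ\pi)\, d\td\nu_{\td x,0;s}$. One must check that $\pi$ maps the measure $\td\nu_{\td x,0;s}$ to $\nu_{x,0;s}$, which is precisely the content of \eqref{eq_sum_formula_K} together with the fact that $\pi$ is a local isometry (so Lebesgue/volume measures match sheet-by-sheet): for a Borel set $A \subseteq M$, $\nu_{x,0;s}(A) = \int_A K(x,0;y,s)\,dg_s(y) = \int_A \sum_{\td y \in \pi^{-1}(y)} \td K(\td x, 0; \td y, s)\, dg_s(y) = \int_{\pi^{-1}(A)} \td K(\td x, 0; \td y, s)\, d\td g_s(\td y) = \td\nu_{\td x,0;s}(\pi^{-1}(A))$. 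Convergence of the sum is guaranteed by the cited \cite[Theorem~26.25]{Chow_book_series_Part_III}, and all integrands are nonnegative so Tonelli applies throughout. Everything else is then immediate from monotonicity of the integral under the distance contraction.
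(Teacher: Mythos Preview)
Your proof is correct and is exactly the approach the paper takes: the paper's one-line proof (``This follows from the fact that $\pi$ is $1$-Lipschitz and \eqref{eq_sum_formula_K}'') is precisely what you have unpacked, namely that \eqref{eq_sum_formula_K} gives the pushforward identity $\pi_*\td\nu_{\td x,0;s}=\nu_{x,0;s}$, and the $1$-Lipschitz property of $\pi$ then contracts the variance integral. One cosmetic remark: in your first displayed attempt you (copying the paper's typo) index the sum over $\td y\in\pi^{-1}(x)$ rather than $\pi^{-1}(y)$, but your subsequent, cleaner pushforward computation is stated correctly.
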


\begin{proof}
This follows from the fact that $\pi$ is 1-Lipschitz and \eqref{eq_sum_formula_K}.
\end{proof}

Fix some finite subset $S \subset \Gamma$ containing the identity element.

\begin{Claim}
If $s$ is small enough, then $d_s(\td z_\alpha, \td z_\beta) \leq 3 \sqrt{H_n |s|}$ for all $\alpha, \beta \in S$.
\end{Claim}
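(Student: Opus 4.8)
The plan is to estimate $d_s(\td z_\alpha,\td z_\beta)$, for $\alpha,\beta\in S$, by passing to the conjugate heat kernels from which the $H_n$-centers $\td z_\alpha=\alpha.\td z$ are built, exploiting both the concentration of these kernels near their $H_n$-centers and the contractivity of the Wasserstein distance along Ricci flow. First I would note that since each $\alpha\in\Gamma$ is an isometry of every time-slice $(\td M,\td g_t)$ commuting with the flow, it maps $(\td x,0)$ to $(\td x_\alpha,0)=(\alpha.\td x,0)$ and the corresponding conjugate heat kernels to one another; hence $(\td z_\alpha,s)=(\alpha.\td z,s)$ is an $H_n$-center of $(\td x_\alpha,0)$. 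Write $\td\nu_{\td x_\alpha,0;s}$ for the conjugate heat kernel probability measure on $(\td M,\td g_s)$ based at $(\td x_\alpha,0)$. By the defining variance bound of an $H_n$-center \cite[Definition~3.10]{Bamler_HK_entropy_estimates} and Cauchy--Schwarz,
\[ W_1^{\td g_s}\big(\delta_{\td z_\alpha},\td\nu_{\td x_\alpha,0;s}\big)\le\Big(\int_{\td M}d_s(\td z_\alpha,\td y)^2\,d\td\nu_{\td x_\alpha,0;s}(\td y)\Big)^{1/2}\le\sqrt{H_n|s|}\qquad\text{for all }\alpha\in S. \]

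The main point is to bound $W_1^{\td g_s}(\td\nu_{\td x_\alpha,0;s},\td\nu_{\td x_\beta,0;s})$ as $s\to-\infty$. Both measures solve the conjugate heat equation along the Ricci flow $(\td M,(\td g_t))$, so by the monotonicity of the Wasserstein distance under Ricci flow (McCann--Topping; for the present complete, bounded-curvature-on-compact-time-intervals setting see \cite{Chow_book_series_Part_III}), the function $t\mapsto W_1^{\td g_t}(\td\nu_{\td x_\alpha,0;t},\td\nu_{\td x_\beta,0;t})$ is non-decreasing on $(-\infty,0)$. Consequently, for all $s\le-1$,
\[ W_1^{\td g_s}\big(\td\nu_{\td x_\alpha,0;s},\td\nu_{\td x_\beta,0;s}\big)\le W_1^{\td g_{-1}}\big(\td\nu_{\td x_\alpha,0;-1},\td\nu_{\td x_\beta,0;-1}\big)=:C_{\alpha,\beta}<\infty, \]
where finiteness holds since each of these measures has finite second (hence first) moment, by the existence of an $H_n$-center at time $-1$. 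Combining this with the previous display and the triangle inequality for $W_1^{\td g_s}$,
\[ d_s(\td z_\alpha,\td z_\beta)=W_1^{\td g_s}(\delta_{\td z_\alpha},\delta_{\td z_\beta})\le 2\sqrt{H_n|s|}+C_{\alpha,\beta}. \]

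Since $S$ is finite, put $C:=\max_{\alpha,\beta\in S}C_{\alpha,\beta}$. Then as soon as $|s|\ge C^2/H_n$ we get $d_s(\td z_\alpha,\td z_\beta)\le 3\sqrt{H_n|s|}$ for all $\alpha,\beta\in S$, which is the claim. The step I expect to require the most care is the Wasserstein monotonicity: one needs a version of the McCann--Topping contraction estimate valid for non-compact complete Ricci flows with bounded curvature on compact time-intervals, and one should check that the conjugate heat kernels on $\td M$ --- which exist as honest probability measures in this setting --- have finite first moments, so that all the Wasserstein distances above are finite and the triangle inequality is legitimate. The more elementary alternative of bounding $W_1^{\td g_s}(\td\nu_{\td x,0;s},\alpha_*\td\nu_{\td x,0;s})$ by the coupling cost $\int_{\td M}d_s(\td y,\alpha.\td y)\,d\td\nu_{\td x,0;s}(\td y)$ seems harder to control, since estimating the displacement function $\td y\mapsto d_s(\td y,\alpha.\td y)$ over the bulk of the heat kernel is essentially the quantity we are trying to bound.
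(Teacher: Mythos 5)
Your proof is correct and follows essentially the same route as the paper: the same triangle inequality in $W_1$, the same variance bound for $H_n$-centers, and the same McCann--Topping/$W_1$-monotonicity for the middle term (the paper's \cite[Lemma~2.7]{Bamler_HK_entropy_estimates}), except that the paper pushes the monotone quantity all the way to time $0$, where it equals the explicit constant $d_0(\td x_\alpha,\td x_\beta)$, rather than stopping at time $-1$.
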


\begin{proof}
Using \cite[Lemma~2.7]{Bamler_HK_entropy_estimates}, we obtain
\begin{multline*}
 d_s (\td z_\alpha, \td z_\beta) 
= d^{g_s}_{W_1} (\delta_{\td z_\alpha}, \delta_{\td z_\beta})
\leq d^{g_s}_{W_1} (\delta_{\td z_\alpha}, \nu_{\td x_\alpha, 0; s}) 
+ d^{g_s}_{W_1} (\nu_{\td x_\alpha, 0; s}, \nu_{\td x_\beta, 0; s})
+ d^{g_s}_{W_1} (\nu_{\td x_\beta, 0; s}, \delta_{\td z_\beta}) \\
\leq 2 \sqrt{H_n |s|} + d_0(\td x_\alpha, \td x_\beta).
\end{multline*}
The last term can be bounded by $\sqrt{H_n |s|}$ for small enough $s$, since $S$ is finite.
\end{proof}

\begin{Claim} \label{Cl_number_preimages}
If $s$ is small enough, then for any $y \in B(z,s,\sqrt{2H_n |s|})$
\[ \# \big(\pi^{-1}(y) \cap B(\td z,s,10 \sqrt{H_n|s|}) \big) \geq \# S. \]
\end{Claim}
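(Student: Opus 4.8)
The goal is to show that for $y$ close to $z$ at time $s$, the preimage $\pi^{-1}(y)$ contains at least $\#S$ points inside the ball $B(\td z, s, 10\sqrt{H_n|s|})$. The natural strategy is to exhibit these points explicitly: for each $\alpha \in S$, I would try to find a point $\td y_\alpha \in \pi^{-1}(y)$ lying in the ball around $\td z$, and then argue that these points are distinct for distinct $\alpha$. The first part is easy: lift $y$ to \emph{some} $\td y \in \pi^{-1}(y)$; since $\pi$ is $1$-Lipschitz and $d_s(y,z) \le \sqrt{2H_n|s|}$, one can choose the lift $\td y$ so that $d_s(\td y, \td z) \le \sqrt{2H_n|s|}$ (lift a near-minimizing path from $z$ to $y$ starting at $\td z$). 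Then set $\td y_\alpha := \alpha.\td y$. Using the previous claim, $d_s(\td y_\alpha, \td z) \le d_s(\alpha.\td y, \alpha.\td z) + d_s(\td z_\alpha, \td z) = d_s(\td y, \td z) + d_s(\td z_\alpha, \td z) \le \sqrt{2H_n|s|} + 3\sqrt{H_n|s|} < 10\sqrt{H_n|s|}$, since the action is by isometries. So each $\td y_\alpha$ lies in the required ball, and each projects to $\alpha.y = y$ (as $\alpha$ is a deck transformation over $y = \pi(\td y)$).

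The remaining, and genuinely substantive, point is to show that the map $\alpha \mapsto \td y_\alpha = \alpha.\td y$ is injective on $S$ — equivalently, that $\alpha.\td y = \beta.\td y$ forces $\alpha = \beta$. Since $\Gamma$ acts freely on $\td M$ (deck transformations of the universal cover), $\alpha.\td y = \beta.\td y$ does already imply $\alpha = \beta$ for \emph{any} $\td y$, so actually injectivity is automatic and no smallness of $s$ is needed for this step. Thus the count $\#(\pi^{-1}(y) \cap B(\td z,s,10\sqrt{H_n|s|})) \ge \#\{\td y_\alpha : \alpha \in S\} = \#S$ follows. The only place smallness of $s$ enters is through the second claim (to get $d_s(\td z_\alpha,\td z) \le 3\sqrt{H_n|s|}$), which we are invoking as given.

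I expect the main subtlety to be the lifting step: one must make sure the lift $\td y$ of $y$ can be chosen within the correct distance of $\td z$, which requires that $\pi$ restricted to a suitable ball in $\td M$ is a metric covering and that near-geodesics from $z$ lift to paths of the same length from $\td z$. Since $(M,g_s)$ is complete, the Riemannian covering $\pi$ has the path-lifting property and is a local isometry, so a path from $z$ to $y$ of length $< \sqrt{2H_n|s|} + \eps$ lifts to a path from $\td z$ of the same length ending at some lift $\td y$ of $y$; letting $\eps \to 0$ and using that only finitely many candidate numbers are involved (or simply choosing $\eps$ small) gives a lift with $d_s(\td y,\td z) \le \sqrt{2H_n|s|}$ — or, harmlessly, $\le \sqrt{2H_n|s|} + 1$, which still keeps $\td y_\alpha$ inside the $10\sqrt{H_n|s|}$-ball once $|s|$ is large. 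One should also check $\td y_\alpha \ne \td z_\beta$ is not needed — we only need the $\td y_\alpha$ among themselves distinct and lying in the ball, which is exactly what freeness of the $\Gamma$-action and the triangle inequality give. Everything else is a routine triangle-inequality bookkeeping, so I would keep the write-up short.
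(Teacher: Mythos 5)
Your proof is correct and follows the same route as the paper: lift $y$ to $\td y\in B(\td z,s,\sqrt{2H_n|s|})$, translate by each $\alpha\in S$, and apply the triangle inequality together with the previous claim's bound $d_s(\td z_\alpha,\td z)\le 3\sqrt{H_n|s|}$. The extra remarks on path-lifting and on freeness of the deck-transformation action (which gives distinctness of the $\td y_\alpha$) are points the paper leaves implicit, but the argument is the same.
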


\begin{proof}
Choose a lift $\td y \in B(\td z,s, \sqrt{2H_n|s|})$ of $y$.
For any $\alpha \in S$ and $\td y_\alpha := \alpha. \td y$ we have
\[ d_s(\td y_\alpha, \td z) \leq d_s(\td y_\alpha,  \td z_\alpha) + d_s( \td z_\alpha, \td z) 
= d_s(\td y,  \td z)+ d_s( \td z_\alpha, \td z) 
\leq \sqrt{2H_n|s|} + 3 \sqrt{H_n |s|} \leq 10 \sqrt{H_n|s|}. \qedhere \]
\end{proof}
\medskip

Now recall from \cite[Theorems~6.2, 8.1]{Bamler_HK_entropy_estimates} that for some dimensional constant $c > 0$
\begin{align*}
 \big|B(z,s,\sqrt{2H_n |s|})\big|_s &\geq c \exp (\NN_{x,0}(-s)) \\
 \big|B(\td z,s,10 \sqrt{H_n|s|})\big|_s &\leq c^{-1} \exp (\NN_{\td x,0}(-s)) 
\end{align*}
So by Claim~\ref{Cl_number_preimages}
\[ \# S \leq \frac{\big|B(\td z,s,10 \sqrt{H_n|s|}) \big|_s}{\big|B( z,s, \sqrt{2H_n|s|}) \big|_s} \leq c^{-2} \exp \big(\NN_{\td x,0}(-s) - \NN_{x,0}(-s) \big).
 \]
The result now follows by letting $s \to -\infty$ and noticing that $S \subset \Gamma$ was chosen arbitrarily.
\end{proof}
\bigskip

Next, we prove Theorems~\ref{Thm_pi1_RR_M}, \ref{Thm_GSS_quotient}.

\begin{proof}[Proof of Theorems~\ref{Thm_pi1_RR_M}, \ref{Thm_GSS_quotient}.]
Fix basepoints $\td x \in \td M$, $x = \pi(\td x) \in M$ and a sequence $\la_i \to 0$ such that the flows $(M,(g_t)_{t \leq 0})$, $(\td M, (\td g_t)_{t \leq 0})$, pointed at $(x,0)$, $(\td x, 0)$ and parabolically rescaled by $\la_i$ converge in the $\IF$-sense to singular gradient shrinking solitons that are represented by $(X,d,\RR_X,g_X, f)$ and $(\td X,\td d,\RR_{\td X},g_{\td X}, \td f)$.
This implies that we can find exhaustions $U_1 \Subset U_2 \Subset \ldots \Subset \RR_X$, $\td U_1 \Subset \td U_2 \Subset \ldots \Subset \RR_{\td X}$ and embeddings $\psi_i : U_i \to M$, $\td\psi_i : \td U_i \to \td M$ such that for $t_i = -\la_i^{-2}$
\begin{equation} \label{eq_Cinfty_conv}
\begin{alignedat}{2}
 \la_i^2 \psi_i^* g_{t_i} &\xrightarrow[i \to \infty]{C^\infty_{\loc}} g_X, \qquad \psi_i^* d\nu_{x,0;t_i} &\xrightarrow[i \to \infty]{C^\infty_{\loc}} (4\pi)^{-n/2} e^{-f} dg_X, \\
\la_i^2 \td\psi_i^* \td g_{t_i} &\xrightarrow[i \to \infty]{C^\infty_{\loc}} g_{\td X}, \qquad \psi_i^* d\nu_{\td x,0;t_i} &\xrightarrow[i \to \infty]{C^\infty_{\loc}} (4\pi)^{-n/2} e^{-\td f} dg_{\td X}.
\end{alignedat}
\end{equation}

Consider the quantity $\rrm$ on $(M,(g_t)_{t \leq 0})$, $(\td M, (\td g_t)_{t \leq 0})$ as defined in \cite[Definition~10.1]{Bamler_HK_entropy_estimates}.

\begin{Claim}\label{Cl_in_psi_image}
If for some sequence $y_i \in M$ we have
\begin{equation} \label{eq_rrm_nu_cond}
 \rrm ( y_i, t_i) \geq r \sqrt{-t_i}, \qquad
\nu_{ x, 0; t_i} \big( B( y_i, t_i, r \sqrt{-t_i}) \big) \geq c, 
\end{equation}
for some uniform $r, c > 0$, then $ y_i \in \psi_i (  U_i)$ for large $i$ and $\psi_i^{-1}(y_i)$ is precompact in $\RR_X$.
The corresponding statement is also true for the flow $(\td M, (\td g_t)_{t \leq 0})$.
\end{Claim}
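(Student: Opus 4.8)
The plan is to combine the two hypotheses in \eqref{eq_rrm_nu_cond} with the structure of $\IF$-convergence and the partial regularity theory of \cite{Bamler_RF_compactness,Bamler_HK_RF_partial_regularity}. After parabolically rescaling by $\la_i$, the hypotheses state that in the rescaled flow the point $y_i$, seen in the time-slice that converges to $g_X$, has curvature radius $\rrm \geq r$ and that the rescaled conjugate heat kernel measure $\nu^i := \nu_{x,0;t_i}$ assigns mass $\geq c$ to the unit-scale metric ball $B(y_i,t_i,r\sqrt{-t_i})$; meanwhile, within the correspondence realizing the $\IF$-convergence, $\nu^i$ converges weakly to the probability measure $\nu_\infty := (4\pi)^{-n/2} e^{-f}\, dg_X$ on $X$ (no mass escapes to infinity, by the definition of $\IF$-convergence). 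I would argue by subsequences: it suffices to prove that, along any subsequence, a further subsequence satisfies $y_i \in \psi_i(U_j)$ for a single index $j$ and that $\psi_i^{-1}(y_i)$ stays in a single compact subset of $\RR_X$, with $j$ and that compact set depending only on $r$ and $c$.

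For the first point (no escape to infinity), pass to a subsequence so that $y_i$ converges in the correspondence to a point $y_\infty$ of the limit. By weak convergence of $\nu^i$ to $\nu_\infty$ together with the mass bound in \eqref{eq_rrm_nu_cond}, some ball $B(y_\infty,\rho)$ of radius $\rho$ a fixed multiple of $r$ has $\nu_\infty$-mass $\geq c$. Since $\nu_\infty$ is a probability measure, at most $1/c$ points of $X$ can be pairwise $2\rho$-separated while each centering a $\rho$-ball of mass $\geq c$; hence the set of such centers is contained in a bounded region $B_d(p_0,D_0)$, with $D_0 = D_0(r,c)$ independent of the subsequence, and in particular $y_\infty \in \ov{B_d(p_0,D_0)}$.

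For the second point (into the regular part), the bound $\rrm(y_i,t_i) \geq r\sqrt{-t_i}$ provides, in the rescaled flow, uniformly controlled geometry on a unit-scale parabolic neighborhood of $y_i$; by the $\eps$-regularity and smooth-compactness-on-the-regular-part statements of \cite{Bamler_HK_entropy_estimates,Bamler_RF_compactness,Bamler_HK_RF_partial_regularity}, the limit point $y_\infty$ then lies in $\RR_X$ with $\rrm(y_\infty) \geq r/2$, and the convergence $\la_i^2 \psi_i^* g_{t_i} \to g_X$ near $y_\infty$ forces $y_i \in \psi_i(U_k)$ for some fixed $U_k$ containing $y_\infty$, once $i$ is large. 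Combining the two points, $y_\infty$ lies in $K := \{p \in \RR_X : \rrm(p) \geq r/2,\ d(p,p_0) \leq D_0\}$; this set is closed and bounded in $X$, and since $\rrm$ cannot stay bounded below along a sequence approaching $\SS_X$, it has positive distance to $\SS_X$. Thus $K$ is a compact subset of $\RR_X$ and there is a fixed index $j$ with $K \Subset U_j$.

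It then follows from the $C^\infty_{\loc}$ convergence in \eqref{eq_Cinfty_conv} on $U_j$ that $\psi_i(U_j)$ eventually contains every point of $M$ corresponding, under the correspondence, to a point of a fixed relatively compact neighborhood $K' \Subset \RR_X$ of $K$; hence $y_i \in \psi_i(U_j) \subset \psi_i(U_i)$ and $\psi_i^{-1}(y_i) \in K'$ for all large $i$, so $\psi_i^{-1}(y_i)$ is precompact in $\RR_X$. As $j$ and $K'$ depend only on $r$ and $c$, no passage to a subsequence was actually needed, and the identical argument yields the statement for $(\td M, (\td g_t)_{t \leq 0})$. I expect the second point to be the main difficulty: it is where one must pass from the flow-theoretic lower bound on $\rrm$ to the conclusion that the $\IF$-limit point is regular with a definite curvature radius and is hit by $\psi_i$, which rests on the smooth compactness of the regular part inside the $\IF$-limit from \cite{Bamler_RF_compactness,Bamler_HK_RF_partial_regularity} rather than on any soft argument.
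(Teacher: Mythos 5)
Your argument is correct and takes essentially the same route as the paper: the mass bound in \eqref{eq_rrm_nu_cond} is what prevents escape to infinity (the paper uses the variance bound $\Var(\nu_{x,0;t_i}) \leq H_n(-t_i)$ where you use a packing argument for the limit measure), and the curvature-radius bound combined with the smooth compactness of the regular part (\cite[Lemma~9.15]{Bamler_RF_compactness}, \cite[Lemma~15.16]{Bamler_HK_RF_partial_regularity}) places the limit in $\RR_X$ and hence $y_i$ in $\psi_i(U_i)$. The only mechanical difference is that the paper avoids converging $y_i$ within the correspondence: it first upgrades the mass bound to all smaller scales $r' \le r$ via \cite[Theorem~6.1]{Bamler_HK_entropy_estimates}, deduces $d_{t_i}(y_i, \psi_i(U_i)) = o(\sqrt{-t_i})$ from $\nu_{x,0;t_i}(\psi_i(U_i)) \to 1$, and then transfers the curvature-radius bound to nearby points $y_i' \in U_i$.
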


\begin{proof}
By \cite[Theorem~6.1]{Bamler_HK_entropy_estimates}, \cite[Lemma~9.15]{Bamler_RF_compactness} we obtain that for any $r' \in (0,r]$ there is a $c'(r')>0$ such that $\nu_{ x, 0; t_i} ( B( y_i, t_i, r' \sqrt{-t_i}) ) > c(r')$ for large $i$.
So since $\nu_{ x,0;t_i} ( \psi_i ( U_i)) \to 1$, we obtain that $(-t_i)^{-1/2} d_{t_i} ( y_i, \psi_i ( U_i) ) \to 0$.
Fix some $y'_i \in U_i$ with 
\begin{equation} \label{eq_d_yi_psiypi}
(-t_i)^{-1/2} d_{t_i} (y_i, \psi_i(y'_i)) \to 0.
\end{equation}
Then we have $\rrm(\psi_i(y'_i), t_i) \geq \frac12 r \sqrt{-t_i}$ for large $i$, so by \cite[Lemma~15.16]{Bamler_HK_RF_partial_regularity} the ball $B( y'_i, r/4) \subset \RR_{ X}$ is relatively compact for large $i$.
Due to (\ref{eq_d_yi_psiypi}) this implies that $y_i \in \psi_i (B( y'_i, r/4))$ for large $i$.
For the last statement observe that due to the bound $\Var (\nu_{x,0;t_i}) \leq H_n (-t_i)$ and the second bound in \eqref{eq_rrm_nu_cond}, the sequence $\psi_i^{-1}(y_i)$ must be bounded.
So we can again apply \cite[Lemma~9.15]{Bamler_RF_compactness}.
\end{proof}

\begin{Claim}  \label{Cl_chi}
For any fixed $j$ we have $\pi (\td\psi_i(\td U_j)) \subset \psi_i(U_i)$ for some large $i$.
Moreover, for any $\td y' \in \RR_{\td X}$ the sequence $\psi_i^{-1}(\pi(\td\psi_i(\td y'))) \in \RR_X$ is precompact.
\end{Claim}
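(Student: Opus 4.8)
The plan is to deduce both assertions from Claim~\ref{Cl_in_psi_image}. Given a precompact subset $K \Subset \RR_{\td X}$ --- we take $K = \{\td y'\}$ for the second assertion and $K = \ov{\td U_j}$ for the first --- together with a sequence $\td y'_i \in K$, set $\td y_i := \td\psi_i(\td y'_i)$ (defined for $i$ large) and $y_i := \pi(\td y_i)$. I would verify that the hypotheses \eqref{eq_rrm_nu_cond} hold for the sequence $(y_i)$ with constants $r, c > 0$ depending only on $K$, and then invoke Claim~\ref{Cl_in_psi_image}.

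First I would establish the corresponding bounds upstairs, i.e.\ that $\rrm(\td y_i, t_i) \geq r \sqrt{-t_i}$ and $\nu_{\td x, 0; t_i}(B(\td y_i, t_i, r\sqrt{-t_i})) \geq c$ for large $i$. Both follow from \eqref{eq_Cinfty_conv} together with the fact that this convergence is a smooth spacetime convergence on the regular part: after parabolically rescaling by $\la_i$, the flow $(\td M, (\td g_t))$ near $\td y_i$ converges to the soliton flow near $\td y'_i$, and since $g_{\td X}$ is smooth on the compact set $K$, the rescaled curvature scale at $\td y_i$ is bounded below uniformly over $K$; since $(4\pi)^{-n/2} e^{-\td f}$ is positive and bounded below on a fixed neighborhood of $K$ in $\RR_{\td X}$ (shrinking $r$ so that the $g_{\td X}$-balls of radius $r$ about points of $K$ are precompact in $\RR_{\td X}$), the $\nu_{\td x, 0; t_i}$-mass of the rescaled ball about $\td y_i$ is bounded below.

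Next I would transfer these bounds across the covering $\pi$. Since $\td g_t = \pi^* g_t$, the map $\pi$ is a Riemannian covering in each time-slice, so lifting minimizing geodesics emanating from $\td y_i$ shows $B(y_i, t, \rho) = \pi(B(\td y_i, t, \rho))$ for every $t \leq t_i$ and every $\rho$; hence $\pi$ carries the parabolic neighborhood of $(\td y_i, t_i)$ onto that of $(y_i, t_i)$, and as $|{\Rm}|$ is preserved by the local isometry $\pi$ we obtain $\rrm(y_i, t_i) = \rrm(\td y_i, t_i) \geq r \sqrt{-t_i}$. For the measure, the summation formula \eqref{eq_sum_formula_K} is equivalent to the push-forward identity $\nu_{x, 0; t_i} = \pi_* \nu_{\td x, 0; t_i}$, and combining this with $B(\td y_i, t_i, r\sqrt{-t_i}) \subset \pi^{-1}(B(y_i, t_i, r\sqrt{-t_i}))$ (valid since $\pi$ is $1$-Lipschitz and $\pi(\td y_i) = y_i$) yields
\[ \nu_{x, 0; t_i}\big( B(y_i, t_i, r\sqrt{-t_i}) \big) = \nu_{\td x, 0; t_i}\big( \pi^{-1}(B(y_i, t_i, r\sqrt{-t_i})) \big) \geq \nu_{\td x, 0; t_i}\big( B(\td y_i, t_i, r\sqrt{-t_i}) \big) \geq c. \]
Thus \eqref{eq_rrm_nu_cond} holds for $(y_i)$, so Claim~\ref{Cl_in_psi_image} gives $y_i \in \psi_i(U_i)$ for large $i$ and precompactness of $\psi_i^{-1}(y_i)$ in $\RR_X$. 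Taking $K = \{\td y'\}$ is exactly the second assertion. For the first assertion I argue by contradiction: if $\pi(\td\psi_i(\td U_j)) \not\subset \psi_i(U_i)$ for infinitely many $i$, pick $\td y'_i \in \td U_j$ with $\pi(\td\psi_i(\td y'_i)) \notin \psi_i(U_i)$, pass to that subsequence, and apply the above with $K = \ov{\td U_j} \Subset \RR_{\td X}$ to conclude $\pi(\td\psi_i(\td y'_i)) \in \psi_i(U_i)$ for large $i$, a contradiction.

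The main obstacle is the transfer of \eqref{eq_rrm_nu_cond} across the covering, and in particular the identity $\rrm(y_i, t_i) = \rrm(\td y_i, t_i)$: a priori the quotient map could bring far-apart, high-curvature regions close together, and the geodesic-lifting observation --- that every point of a metric ball downstairs has a lift in the concentric ball upstairs --- is what rules this out and shows the curvature scale is genuinely unchanged. The other point requiring care is that \eqref{eq_Cinfty_conv} is stated only in a single time-slice, so to bound the spacetime quantity $\rrm(\td y_i, t_i)$ one must appeal to the smooth spacetime convergence on the regular part that underlies the $\IF$-convergence; granting this, the remaining steps are routine applications of Claim~\ref{Cl_in_psi_image} and \eqref{eq_sum_formula_K}.
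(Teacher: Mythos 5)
Your proposal is correct and follows the same route as the paper: argue the first assertion by contradiction, verify that condition \eqref{eq_rrm_nu_cond} holds for $y_i = \pi(\td\psi_i(\td y'_i))$ using the relative compactness of $\td U_j$ and the convergence \eqref{eq_Cinfty_conv}, and then invoke Claim~\ref{Cl_in_psi_image} (and its last statement for the precompactness assertion). The paper leaves the transfer of \eqref{eq_rrm_nu_cond} across the covering implicit, whereas you spell it out via geodesic lifting, the local-isometry invariance of $\rrm$, and the push-forward identity $\nu_{x,0;t_i} = \pi_*\nu_{\td x,0;t_i}$ coming from \eqref{eq_sum_formula_K} --- all of which is consistent with the intended argument.
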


\begin{proof}
Suppose this was false for some fixed $j$, so after passing to a subsequence, we find $\td y'_i \in \td U_j$ such that $\pi(\td\psi_i(\td y'_i)) \not\in \psi_i(U_i)$.
By relative compactness of $\td U_j$ and the convergence \eqref{eq_Cinfty_conv} we can ensure that a condition of the form \eqref{eq_rrm_nu_cond} holds for $y_i := \pi(\td\psi_i(\td y'_i))$.
This contradicts our assumption via Claim~\ref{Cl_in_psi_image}.
The last statement follows from the last statement of Claim~\ref{Cl_in_psi_image}.
\end{proof}

So after possibly shrinking $\td U_i$, we may assume in the following that
\[ \pi (\td\psi_i(\td U_i)) \subset \psi_i(U_i). \]

\begin{Claim} \label{Cl_surj}
For any $y' \in \RR_{X}$ there is some $j$ such that we can find a sequence $\td y'_i \in \td U_j$ with $\psi_i (y') = \pi (\td\psi_i(\td y'_i))$ for large $i$.
\end{Claim}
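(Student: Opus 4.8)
The plan is to reverse the roles of $M$ and $\td M$ in the argument of Claim~\ref{Cl_chi}. There I showed that the image of a relatively compact piece $\td\psi_i(\td U_j) \subset \td M$ lands in $\psi_i(U_i)$ after projecting by $\pi$; now I want to show that every relatively compact piece of $\psi_i(U_i)$ is hit by such a projection. The key point is that the regular-scale and concentration conditions \eqref{eq_rrm_nu_cond} that govern membership in the $\psi_i$-image are comparable for $M$ and $\td M$ at corresponding points, because $\pi$ is a Riemannian covering (hence a local isometry, and $1$-Lipschitz globally) and because the conjugate heat kernels are related by the summation formula \eqref{eq_sum_formula_K}.

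Concretely, fix $y' \in \RR_X$ and set $y_i := \psi_i(y') \in M$ for large $i$. By relative compactness of $y'$ in $\RR_X$ and the convergence \eqref{eq_Cinfty_conv}, a condition of the form \eqref{eq_rrm_nu_cond} holds at $y_i$ on $M$: indeed $\la_i^2\psi_i^* g_{t_i} \to g_X$ in $C^\infty_{\loc}$ forces $\rrm(y_i,t_i) \ge r\sqrt{-t_i}$ for a uniform $r>0$ (this is exactly how \eqref{eq_rrm_nu_cond} was verified in the proof of Claim~\ref{Cl_chi}), and $\psi_i^* d\nu_{x,0;t_i} \to (4\pi)^{-n/2}e^{-f}\,dg_X$ gives $\nu_{x,0;t_i}(B(y_i,t_i,r\sqrt{-t_i})) \ge c$ for a uniform $c>0$, since $y'$ has positive $e^{-f}\,dg_X$-measure in any fixed ball around it. Next, choose any lift $\td y_i \in \pi^{-1}(y_i)$ lying in $B(\td z, t_i, C\sqrt{-t_i})$ for a fixed $C$ — this is possible by the same center-transport estimates used in Claims~2--4 of the proof of Theorem~\ref{Thm_finite_fg} (transport the $H_n$-center of $(\td x,0)$ along $\pi^{-1}$). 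Since $\pi$ is a local isometry, $\rrm(\td y_i, t_i) \ge r\sqrt{-t_i}$ as well; and from \eqref{eq_sum_formula_K} together with $\Var(\nu_{\td x,0;t_i}) \le H_n(-t_i)$, the $\td x$-based conjugate heat kernel measure of $B(\td y_i, t_i, r\sqrt{-t_i})$ is bounded below by a uniform constant (only the lifts in a bounded set contribute non-negligibly, and at least the chosen lift does). Thus \eqref{eq_rrm_nu_cond} holds for $\td y_i$ on $\td M$, so by the last sentence of Claim~\ref{Cl_in_psi_image} we get $\td y_i \in \td\psi_i(\RR_{\td X})$ with precompact preimage; writing $\td y'_i := \td\psi_i^{-1}(\td y_i)$, precompactness means $\td y'_i \in \td U_j$ for a single fixed $j$ (after passing to a subsequence, then noting the bound is uniform), and by construction $\pi(\td\psi_i(\td y'_i)) = \pi(\td y_i) = y_i = \psi_i(y')$.

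The main obstacle is the lower bound on $\nu_{\td x,0;t_i}(B(\td y_i, t_i, r\sqrt{-t_i}))$: a priori the mass of $\nu_{\td x,0;t_i}$ could spread over infinitely many $\Gamma$-translates of $\td y_i$ and become negligible at each one. This is handled by the variance/center estimates — the measure $\nu_{\td x,0;t_i}$ is concentrated, up to scale $\sqrt{H_n(-t_i)}$, near the $H_n$-center $(\td z,t_i)$, so only boundedly many translates are relevant, and the already-established lower bound $|B(\td z,t_i,\sqrt{2H_n|t_i|})|_{t_i} \ge c\exp(\NN_{\td x,0}(-t_i))$ from the proof of Theorem~\ref{Thm_finite_fg}, combined with the $C^\infty_\loc$-convergence \eqref{eq_Cinfty_conv} on $\td M$, pins down a definite amount of mass near the chosen lift. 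One must also be slightly careful that the index $j$ can be chosen independently of $y'$ only after fixing $y'$; as stated the claim only asks for existence of $j$ for each $y'$, so a subsequential precompactness argument via the last statement of Claim~\ref{Cl_in_psi_image} suffices.
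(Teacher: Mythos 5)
Your overall strategy is the paper's: verify \eqref{eq_rrm_nu_cond} for $y_i = \psi_i(y')$ downstairs, pass to a lift in $\td M$ for which \eqref{eq_rrm_nu_cond} also holds, and conclude via (the $\td M$-version of) Claim~\ref{Cl_in_psi_image}. The verification of \eqref{eq_rrm_nu_cond} downstairs, the local-isometry argument for $\rrm$ upstairs, and the passage from precompactness of $\td\psi_i^{-1}(\td y_i)$ to a fixed $\td U_j$ are all fine.

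The gap is in the step you yourself flag as the main obstacle: the lower bound on $\nu_{\td x,0;t_i}\big(B(\td y_i,t_i,r\sqrt{-t_i})\big)$ for your chosen lift. You pick an \emph{arbitrary} lift $\td y_i$ lying within $C\sqrt{-t_i}$ of the $H_n$-center $\td z$ and assert that ``at least the chosen lift'' carries a definite amount of mass. That does not follow. By the estimate $d_{t_i}(\td z_\alpha,\td z_\beta)\le 3\sqrt{H_n|t_i|}$ from the proof of Theorem~\ref{Thm_finite_fg}, \emph{all} $\#\Gamma$ lifts of $y_i$ may lie within $O(\sqrt{-t_i})$ of $\td z$, and the variance bound only localizes $\nu_{\td x,0;t_i}$ near $\td z$ at scale $\sqrt{H_n(-t_i)}$ --- it says nothing about how the mass is split among several lifts sitting at that scale, so the particular lift you picked could receive essentially none of it. The correct (and simpler) argument is a pigeonhole over the finite group $\Gamma$: since $\pi^{-1}\big(B(y_i,t_i,r\sqrt{-t_i})\big)\subset\bigcup_{\alpha\in\Gamma}B(\alpha.\td y_i,t_i,r\sqrt{-t_i})$ and, by \eqref{eq_sum_formula_K}, $\nu_{\td x,0;t_i}\big(\pi^{-1}(B(y_i,t_i,r\sqrt{-t_i}))\big)=\nu_{x,0;t_i}\big(B(y_i,t_i,r\sqrt{-t_i})\big)\ge c$, some lift satisfies the second condition of \eqref{eq_rrm_nu_cond} with $c$ replaced by $c/\#\Gamma$; choose \emph{that} lift. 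This is exactly what the paper does, and with this one-line replacement your proof goes through.
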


\begin{proof}
We can choose uniform constants $r, c >0$ such that \eqref{eq_rrm_nu_cond} holds for $y_i := \psi_i (y')$ for large $i$.
We can find lifts $\td y_i \in \td M$, $y_i = \pi (\td y_i)$ such that \eqref{eq_rrm_nu_cond} holds with $c$ replaced by $c/ \# \Gamma$.
The claim now follows from Claim~\ref{Cl_in_psi_image}.
\end{proof}

\begin{Claim} \label{Cl_K_alpha_K}
For any fixed $j$ we have $\alpha . \td\psi_i(\td U_j) \subset \td\psi_i (\td U_i)$ for large $i$ and all $\alpha \in \Gamma$.
Moreover, for any $\td y_i \in \td\psi_i(\td U_i)$ and $\alpha \in \Gamma$ the sequence $\td\psi_i^{-1} ( \alpha. \td y_i)$ is precompact in $\RR_{\td X}$.
Lastly, for any $y' \in \RR_X$ and $\alpha \in \Gamma$ we have
\begin{equation} \label{eq_alph_K_K}
 \lim_{i \to \infty} \big( (-t_i)^{n/2} K(\td x, 0; \td\psi_i(y')) -(-t_i)^{n/2} K(\td x, 0; \alpha. \td\psi_i(y')) \big) = 0. 
\end{equation}
\end{Claim}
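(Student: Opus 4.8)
The plan is to derive the first two statements from Claim~\ref{Cl_in_psi_image} applied to the covering flow $(\td M,(\td g_t))$ --- which is itself non-collapsed, since \eqref{eq_sum_formula_K} bounds its pointed Nash entropy from below (cf. the proof of Theorem~\ref{Thm_finite_fg}), so that all estimates of \cite{Bamler_HK_entropy_estimates, Bamler_RF_compactness, Bamler_HK_RF_partial_regularity} apply on $\td M$ as well --- and to obtain \eqref{eq_alph_K_K} from a comparison of the conjugate heat kernels based at $\td x$ and at $\alpha^{-1}.\td x$, exploiting that these two points lie at a fixed finite distance while the blow-down scale $\sqrt{-t_i}$ tends to infinity. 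By Theorem~\ref{Thm_finite_fg} the group $\Gamma$ is finite, so it is enough to treat one $\alpha \in \Gamma$ at a time and take the threshold on $i$ uniform over $\Gamma$ at the end.

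For the first two statements I would fix $j$, take an arbitrary sequence $\td y'_i \in \td U_j$, set $\td w_i := \alpha.\td\psi_i(\td y'_i)$, and verify that $(\td w_i)$ satisfies a condition of the form \eqref{eq_rrm_nu_cond} on $(\td M,(\td g_t))$. The curvature-scale bound is automatic: $\rrm(\td w_i, t_i) = \rrm(\td\psi_i(\td y'_i), t_i) \geq r\sqrt{-t_i}$ for large $i$, since $\alpha$ is an isometry of the flow and $\td U_j \Subset \RR_{\td X}$, so \eqref{eq_Cinfty_conv} applies. For the measure bound, $\nu_{\td x, 0; t_i}(B(\td w_i, t_i, r\sqrt{-t_i})) = \nu_{\alpha^{-1}.\td x, 0; t_i}(B(\td\psi_i(\td y'_i), t_i, r\sqrt{-t_i}))$; the argument in the proof of Claim~\ref{Cl_in_psi_image} bounds $\nu_{\td x, 0; t_i}(B(\td\psi_i(\td y'_i), t_i, \tfrac{r}{2}\sqrt{-t_i}))$ below by a positive constant $c_1$, and then monotonicity of the $W_1$-distance of conjugate heat kernel measures combined with Markov's inequality applied to an optimal coupling gives
\[ \nu_{\alpha^{-1}.\td x, 0; t_i}\big(B(\td\psi_i(\td y'_i), t_i, r\sqrt{-t_i})\big) \;\geq\; \nu_{\td x, 0; t_i}\big(B(\td\psi_i(\td y'_i), t_i, \tfrac{r}{2}\sqrt{-t_i})\big) - \frac{2\,d_0(\td x, \alpha^{-1}.\td x)}{r\sqrt{-t_i}} \;\geq\; \frac{c_1}{2} \]
for large $i$, because $d_0(\td x, \alpha^{-1}.\td x)$ is a fixed number. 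By Claim~\ref{Cl_in_psi_image} on $\td M$ it follows that $\td w_i \in \td\psi_i(\td U_i)$ for large $i$ and that $\td\psi_i^{-1}(\td w_i)$ is precompact in $\RR_{\td X}$; taking $\td y'_i$ fixed yields the second statement, and if the first failed for some $j$ we would, along a subsequence, find $\td y'_i \in \td U_j$ with $\alpha.\td\psi_i(\td y'_i) \notin \td\psi_i(\td U_i)$, contradicting this.

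The real content is \eqref{eq_alph_K_K}. Fixing $y' \in \RR_X$, lifts $\td y'_i \in \td U_j$ with $\pi(\td\psi_i(\td y'_i)) = \psi_i(y')$ from Claim~\ref{Cl_surj}, and $\td q_i := \td\psi_i(\td y'_i)$, deck invariance rewrites the difference in \eqref{eq_alph_K_K} as $(-t_i)^{n/2}|\td K(\td x, 0; \td q_i, t_i) - \td K(\alpha^{-1}.\td x, 0; \td q_i, t_i)|$; by the reproduction formula at time $\bar s_i := t_i/2$ this equals $\big|\int_{\td M} F_i\,(d\nu_{\td x, 0; \bar s_i} - d\nu_{\alpha^{-1}.\td x, 0; \bar s_i})\big|$, where $F_i(\td w) := (-t_i)^{n/2}\td K(\td w, \bar s_i; \td q_i, t_i)$. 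I would then combine four standard facts from \cite{Bamler_HK_entropy_estimates}: the on-diagonal heat kernel bound ($\|F_i\|_\infty$ bounded by a dimensional constant, using non-collapsedness of $\td M$); the (conjugate) heat kernel gradient estimate ($|\nabla F_i|$ bounded by $C(A)/\sqrt{-t_i}$ on a ball of radius $A\sqrt{-t_i}$ about an $H_n$-center); the variance bound, which confines all but an $O(A^{-2})$ fraction of the mass of each of the two measures to that ball; and monotonicity of the $W_1$-distance, which bounds $d^{g_{\bar s_i}}_{W_1}(\nu_{\td x, 0; \bar s_i}, \nu_{\alpha^{-1}.\td x, 0; \bar s_i})$ by the fixed number $d_0(\td x, \alpha^{-1}.\td x)$. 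Cutting $F_i$ off outside the doubled ball by a cutoff $\eta_i$, the near part is at most $\operatorname{Lip}(\eta_i F_i)\, d_0(\td x, \alpha^{-1}.\td x) \leq C(A)\, d_0(\td x, \alpha^{-1}.\td x)/\sqrt{-t_i}$ and the far part is $O(A^{-2})$; sending $i \to \infty$ and then $A \to \infty$ finishes the proof. I expect this last step to be the main obstacle: the heat kernel and gradient estimates must be used in scale-invariant form so that their constants remain bounded as $t_i \to -\infty$ --- there is no uniform curvature bound at scale $\sqrt{-t_i}$, so one genuinely relies on the entropy-based estimates of \cite{Bamler_HK_entropy_estimates} rather than curvature-dependent parabolic regularity --- and the Gaussian tails of the two measures have to be absorbed cleanly. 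Should \cite{Bamler_HK_entropy_estimates} already contain a continuity estimate for conjugate heat kernels in the basepoint at the natural parabolic scale, this step reduces to a citation.
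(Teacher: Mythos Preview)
Your handling of the first two statements is correct and essentially matches the paper's: both verify that $\alpha.\td\psi_i(\td y'_i)$ satisfies a condition of the form \eqref{eq_rrm_nu_cond} on $\td M$ and then invoke Claim~\ref{Cl_in_psi_image}. The only difference is cosmetic --- where you transfer the measure lower bound from basepoint $\td x$ to $\alpha^{-1}.\td x$ via $W_1$-monotonicity and a coupling/Markov inequality, the paper cites the equivalent fact that $q \mapsto \Phi^{-1}(\nu_{q,0;t_i}(S))$ is $(-t_i)^{-1/2}$-Lipschitz \cite[Definition~3.2(6)]{Bamler_RF_compactness}, which yields directly, for every fixed $r>0$,
\[
\lim_{i\to\infty}\big(\nu_{\td x,0;t_i}(B(\td y_i,t_i,r\sqrt{-t_i}))-\nu_{\alpha^{-1}.\td x,0;t_i}(B(\td y_i,t_i,r\sqrt{-t_i}))\big)=0.
\]

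For \eqref{eq_alph_K_K} the routes genuinely diverge. Your reproduction formula at time $t_i/2$ and near/far decomposition are sound in principle, but the scale-invariant \emph{pointwise} gradient bound $|\nabla F_i|\leq C(A)/\sqrt{-t_i}$ on a ball about an $H_n$-center at time $t_i/2$ --- where no curvature control is available --- is precisely the obstacle you flag, and it is not a single citation. The paper sidesteps this entirely: the ball-measure comparison displayed above already encodes the basepoint continuity, and one passes to densities \emph{locally} rather than globally. Since $\td y_i:=\td\psi_i(y')$ lies where $\rrm\geq r_0\sqrt{-t_i}$, local parabolic estimates \cite[Lemma~9.15]{Bamler_RF_compactness} give uniform Lipschitz control on both rescaled densities $(-t_i)^{n/2}K(\td x,0;\cdot,t_i)$ and $(-t_i)^{n/2}K(\alpha^{-1}.\td x,0;\cdot,t_i)$ near $\td y_i$; dividing the displayed identity by the rescaled ball volume and letting $r\to 0$ then converts it to the pointwise statement. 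This is exactly the ``continuity estimate in the basepoint'' you anticipated at the end --- it lives in \cite{Bamler_RF_compactness} as an axiom of $H_n$-concentrated flows --- and using it keeps the argument to two lines while confining all gradient estimates to the region where curvature is already controlled.
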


\begin{proof}
Since $\Gamma$ is finite by Theorem~\ref{Thm_finite_fg}, it suffices to consider a fixed $\alpha \in \Gamma$.
The first two statements follow by contradiction, similarly as in the proof of Claim~\ref{Cl_surj}:
Suppose that there are $\td y_i \in \td\psi_i(\td U_j)$ such that $\alpha. \td y_i \not\in \td\psi (\td U_i)$.
We can again find $r, c > 0$ such that \eqref{eq_rrm_nu_cond} holds for $\td y_i$.
Next, observe that $\rrm (\alpha . \td y_i) = \rrm (\td y_i)$.
Moreover, since for any measurable subset $S \subset \td M$ the map $q \mapsto \Phi^{-1} (\nu_{q,0;t_i}(S) )$ is $(-t_i)^{-1/2}$-Lipschitz (see \cite[Definition~3.2(6)]{Bamler_RF_compactness}), we obtain
\begin{equation} \label{eq_nu_B_similar}
 \lim_{i \to \infty} \big( \nu_{\td x, 0; t_i} \big( B( \td y_i, t_i, r \sqrt{-t_i}) \big) 
-  \nu_{\alpha^{-1} . \td x, 0; t_i} \big( B( \td y_i, t_i, r \sqrt{-t_i}) \big) \big) = 0. 
\end{equation}
So by Claim~\ref{Cl_in_psi_image} we must have $\alpha. \td y_i \in \td\psi_i (\td U_i)$ for large $i$.
Finally \eqref{eq_alph_K_K} follows from \eqref{eq_nu_B_similar} after letting $r \to 0$.
\end{proof}

Choose $\chi_i : \td U_i \to U_i$ such that $\psi_i \circ \chi_i = \pi \circ \td\psi_i$.
Due to \eqref{eq_Cinfty_conv} we have
\[ \chi_i^* g_{X} \xrightarrow[i\to\infty]{C^\infty_{\loc}} g_{\td X}, \qquad
(\chi_i)_* e^{-\td f} dg_{\td X} \xrightarrow[i\to\infty]{C^\infty_{\loc}} e^{- f} dg_{ X}. \]
By Claim~\ref{Cl_chi} the sequence $\chi_i(\td y') \in \RR_X$ is precompact for any $\td y' \in \RR_{\td X}$.
So by Arzela-Ascoli 
we obtain that, after passing to a subsequence, we have $\chi_i \to \pi'$ in $C^\infty_{\loc}$ for some $\pi' : \RR_{\td X} \to \RR_X$ satisfying
\[ (\pi')^* g_{X} = g_{\td X}, \qquad \pi'_* e^{-\td f} dg_{\td X} = e^{- f} dg_{ X}. \]
Moreover, $\pi'$ is surjective by Claim~\ref{Cl_surj}.

By the same argument, and using Claim~\ref{Cl_K_alpha_K}, we obtain that, after passing to a subsequence, the action of $\Gamma$ via deck transformations converges to a smooth, isometric action on $(\RR_{\td X}, g_{\td X})$, which is equivariant under $\pi'$, fixes $\td f$ and acts transitively on the preimages $(\pi')^{-1}(y')$, $y' \in \RR_X$.
Since $\RR_{\td X}$ is connected, it follows that $\pi'$ is a covering map.
The fact that the limiting action of $\Gamma$ on $\RR_{\td X}$ is free follows by smoothness and the fact that $\Gamma$ is finite, via a center of mass construction.

So $(\RR_X, g_X)$ is indeed the isometric quotient of $(\RR_{\td X}, g_{\td X})$ under the limiting action of $\Gamma$.
Since $\td f$ factors through $\pi'$ and since 
\[ \int_{\RR_X} (4\pi)^{-n/2} e^{-f} dg_X 
= \int_{\RR_{\td X}} (4\pi)^{-n/2} e^{-\td f} dg_{\td X} = 1, \]
we obtain that $f$ the the quotient of $\td f - \log \# \Gamma$.

We now construct the map $\phi : \pi_1 (\RR_X) \to \pi_1(M)$.
Fix basepoints $p \in \RR_{X}$, $q \in M$ and lifts $\td p \in \RR_{\td X}$, $\td q \in \td M$; we will consider all fundamental groups with respect to these basepoints.
Observe that $\pi_1(\RR_X) / \pi'_* \pi_1(\RR_{\td X})$ is naturally isomorphic to the group of deck transformations $\Gamma$ of $\pi' : \RR_{\td X} \to \RR_X$.
This produces a sequence of natural homomorphisms $\phi_i : \pi_1(\RR_X) \to \Gamma = \pi_1(M)$ with kernel $\pi'_* \pi_1(\RR_{\td X})$.
After passing to a subsequence, we may assume that $\phi_i = \phi$ is independent of $i$.
Let us describe $\phi$ in geometric terms.
Fix $i$ and consider a closed loop $\gamma : [0,1] \to U_i$ based at $p$.
Set $q'_i := \psi_i(p)$ and let $\td q'_i \in \td M$ be the lift of $q'_i$ near $\td\psi_i (\td p)$.
Then $\phi ([\gamma])$ represents the deck transformation that maps $\td q'_i$ to the endpoint of the lift of $\psi_i \circ \gamma$ starting at $\td q'_i$.
So if $\sigma_i : [0,1] \to M$ is the projection of a path between $\td q$ and $\td q'_i$, then $\phi ([\gamma])$ is represented by the concatenation of $\sigma_i * (\psi_i \circ \gamma) * \sigma^{-1}_i$.
It follows that there is an isotopy $\xi_i : M \to M$ sending $q'_i$ to $q$ such that $\phi_i = (\xi_i \circ \psi_i)_*$.
This explains the construction of the embedding $\psi$ in the commutative diagram in Theorem~\ref{Thm_pi1_RR_M} and proves the first part of Theorem~\ref{Thm_GSS_quotient}.
\end{proof}
\bigskip

Corollary~\ref{Cor_sc} is a direct consequence of Theorem~\ref{Thm_pi1_RR_M}.
Corollary~\ref{Cor_Omega} follows using Van Kampen's Theorem and the fact that the boundary components of $\psi(\Omega)$ need to be separating, because $\pi_1 (M)$ is finite.
Corollary~\ref{Cor_pi1_expNN} follows from Theorem~\ref{Thm_GSS_quotient} and the fact that the pointed Nash entropies pass to the limit.

\appendix
\section{Justification of the non-compact case} \label{appendix}
We will now explain how the techniques from \cite{Bamler_HK_entropy_estimates,Bamler_RF_compactness, Bamler_HK_RF_partial_regularity} can be generalized to the setting of Ricci flows with complete time-slices and bounded curvature on compact time-intervals.

First, note that in this series of papers global, spatial integrals are usually taken with respect to a background measure of the form $d\nu_t = (4\pi \tau)^{-n/2} e^{-f} dg_t$, possibly combined with an integration over a compact time-interval.
Here $(4\pi \tau)^{-n/2} e^{-f} = K(x_0, t_0; \cdot, \cdot)$ denotes the conjugate heat kernel based at a point $(x_0, t_0)$ and $\tau = t_0 - t$.
Due to \cite[Theorems~26.25, 26.31]{Chow_book_series_Part_III} (combined with Bishop-Gromov volume comparison) the density function of the of this measure satisfies an upper and lower Gaussian bound of the form
\begin{multline} \label{eq_HK_bound_Gau}
 \frac{C^{-1}}{|B(x_0,t_0,\sqrt{\tau})|_{t_0}} \exp \bigg( {- \frac{d^2_t(x_0,x)}{C^{-1} \tau} }\bigg) \leq (4\pi \tau)^{-n/2} e^{-f(x,t)} = K(x_0, t_0; x,t) \\ \leq \frac{C}{|B(x_0,t_0,\sqrt{\tau})|_{t_0}} \exp \bigg( {- \frac{d^2_t(x_0,x)}{C\tau} }\bigg), 
\end{multline}
where $C$ may depend on a global curvature bound on $M \times [t,t_0]$.
Observe that \eqref{eq_HK_bound_Gau} implies a quadratic growth bound on $f$.
Due to local gradient estimates (see for example \cite[Lemma~9.15]{Bamler_RF_compactness}), we also obtain Gaussian bounds of the form
\[ |\nabla^m f|^p \, (4\pi \tau)^{-n/2} e^{-f(x,t)}  \leq \frac{C}{\tau^{n/2}} \exp \bigg( {- \frac{d^2_t(x_0,x)}{C\tau} }\bigg), \]
where $C$ may now also depend on $m, p$ and on the injectivity radius at $x_0$.
Similar bounds are true if we replace $|\nabla^m f|^p$ by $|\nabla^m u|^p$ for a solution $u$ of the heat equation on a compact time-interval that grows at most polynomially in space or by terms of the form $|\nabla^m_{x_0} K(x_0, t_0; x,t)|$.
On the other hand, due to the uniform curvature bound on compact time-intervals, we have an upper volume bound on distance balls of the form $|B(p,t,r)| \leq C e^{Cr}$.
This justifies the existence of the global integrals in \cite{Bamler_HK_entropy_estimates,Bamler_RF_compactness, Bamler_HK_RF_partial_regularity} and the application of Stokes' Theorem wherever necessary.

Let us now discuss how to resolve remaining issues in the non-compact case.

The main results of \cite{Bamler_HK_entropy_estimates} were generalized to the non-compact setting in \cite{Ma_Zhang_2021_ancient,Chan_Ma_Zhang_2021_ancient}; we include a brief overview for completeness:
\begin{enumerate}[label=\arabic*.]
\item The maximum principle is applied in the proofs of \cite[Theorems~4.1, 7.2, 11.1]{Bamler_HK_entropy_estimates} and occasionally to justify a lower scalar curvature bound of the form $R \geq - \frac{n}{2t}$.
In the non-compact case these applications remain justified by \cite[Theorem~12.22]{Chow_book_series_Part_II} and local gradient estimates.
\item In the proof of \cite[Theorem~4.1]{Bamler_HK_entropy_estimates} the compactness property is used to guarantee that $u_{t_0}$ takes values in $(\eps, 1-\eps)$ for some $\eps > 0$, as well as in the application of a maximum principle.
Alternatively, we could consider the function $(1-2\eps) u + \eps$ instead of $u$ and then let $\eps \to 0$.
\item In the proof of \cite[Claim~4.6]{Bamler_HK_entropy_estimates}, we required $b_m > \max_M q$, which may be infinite in the non-compact case.
Instead, we can just use bound $\nu (\{ q > b_m\}) \leq \frac1{b_m} \int_M q \, d\nu \leq \frac{C}{b_m}$ and let $b_m \to \infty$.
\item In the proof of \cite[Theorem~7.2]{Bamler_HK_entropy_estimates}, we started an induction with the statement that given a Ricci flow defined on $[0,1]$ there are some uniform constants $Q, Z < \infty$ such that
\begin{equation} \label{eq_K_bound_ZQ}
 K(x,t;y,0) \leq \frac{2Z \exp(-\NN^*_0(x,t))}{t^{n/2}} \exp \bigg({- \frac{d^2_0(z,y)}{Qt} }\bigg) 
\end{equation}
for any $(x,t) \in (0,\frac12] \times M$ and any $H_n$-center $(z,0)$ of $(x,t)$.
To see that such a bound holds, recall that by \cite[Theorem~6.1]{Bamler_HK_entropy_estimates} and the global curvature bound we must have
\begin{equation} \label{eq_lower_vol_bound}
 |B(x,t,\sqrt{t})|_t \geq c \exp (\NN^*_0(x,t)) 
\end{equation}
for some uniform $c > 0$.
Moreover, it follows from \eqref{eq_HK_bound_Gau} that
\begin{equation} \label{eq_K_N_bound}
 K(x,t;y,0) \leq \frac{C}{|B(x,t,\sqrt{t})|_t} \exp \bigg({- \frac{d^2_0(x,y)}{Ct} }\bigg), 
\end{equation}
where $C$ is independent of $x,y,t$.
This implies a bound of the form $d_0(x,z) \leq C \sqrt{t}$, which combined with \eqref{eq_lower_vol_bound} \eqref{eq_K_N_bound} gives \eqref{eq_K_bound_ZQ}.
\item The function $h$ \cite[Theorem~11.1]{Bamler_HK_entropy_estimates} should be assumed to be in $C^1 \cap L^p(d\nu_{t_0-\tau})$ and the proof follows by approximating $h$ with bounded functions.
\item The function $u$ \cite[Theorem~12.1]{Bamler_HK_entropy_estimates} should be assumed to have at most polynomial spatial growth.
\end{enumerate}
Note that \cite[Corollary~9.6]{Bamler_HK_entropy_estimates} implies that $P^*$-parabolic neighborhoods are, in fact, relatively compact within $M \times I$, so any analysis restricted to them is still local.

For \cite{Bamler_RF_compactness} we only need to realize that non-compact Ricci flows with complete time-slices and bounded curvature on compact time-intervals satisfy the axioms of an $H_n$-concentrated metric flow.
This has already been discussed in regards to \cite{Bamler_HK_entropy_estimates}.

Let us now focus on \cite{Bamler_HK_RF_partial_regularity}:
\begin{enumerate}[label=\arabic*., start=7]
\item At various places, we use Perelman's differential Harnack inequality
\[ w = \tau(2\triangle f- |\nabla f|^2 + r) + f - n \leq 0 \]
for conjugate heat kernels $v = (4\pi\tau)^{-n/2} e^{-f}$.
This follows via the maximum principle from the inequality $\square^* (wu) = -2 \tau |\Ric + \nabla^2 - \frac1{2\tau} |^2 v \leq 0$ \cite{Perelman1} in combination with an asymptotic condition on $w, u$ as $\tau \searrow 0$ (see \cite[Section~4]{Chow_book_series_Part_II}, \cite{Ni_2006_note_Harnack}).
The application of the maximum principle remains justified by \cite[Theorem~12.22]{Chow_book_series_Part_II} and local gradient estimates.
The proof of the asymptotic condition continues to work out in the non-compact setting after some modifications of the arguments.
Alternatively, one may also express the flow restricted on a small time-interval as a limit of Ricci flows on compact manifolds, using \cite{Hamilton_RF_compactness, Cheeger_Gromov_Chopping_RM}.
Then the bound $w \leq 0$ holds for small $\tau$ due to a limit argument, so by the maximum principle it holds for all $\tau$.
\item Note that the weak splitting maps constructed in 
 \cite[Sections~10, 11]{Bamler_HK_RF_partial_regularity} arise by linear combination of potentials of conjugate heat kernels, so by \eqref{eq_HK_bound_Gau} their spatial growth is at most quadratic. 
  Similarly, the almost radial function $q$ from \cite[Proposition~13.1]{Bamler_HK_RF_partial_regularity} and the weak splitting map in the end of the proof of \cite[Proposition~13.19]{Bamler_HK_RF_partial_regularity} have at most polynomial spatial growth.
 Moreover, the strong splitting maps from \cite[Proposition~12.21]{Bamler_HK_RF_partial_regularity} are uniformly bounded by construction.
 \item In order to apply the degree argument in the proof of \cite[Proposition~16.1]{Bamler_HK_RF_partial_regularity}, we need to ensure that the maps $q_i$ are proper.
 This is the case due to \eqref{eq_HK_bound_Gau} if the strong splitting maps $y^i_j$ are bounded, which we may always assume.
 \item The terms $\square |\omega_l|$ in \cite[Section~17]{Bamler_HK_RF_partial_regularity} may not satisfy good bounds at spatial infinity; however, they are bounded from above by $|\square \omega_l|$, which does.
This implies that the integral of $\square |\omega_l|$ against some conjugate heat kernel measure is defined, but may be infinite.
This, or a comparable term, occurs in an integration-by-parts argument only in \cite[(17.23), (17.24), Lemmas~17.37, 17.38]{Bamler_HK_RF_partial_regularity}.
Each time, we may instead first compute integrals of the form $\int \int_M \square (|\omega_L| \eta_L) d\nu_t dt$ for some spatial cutoff function $\eta_L$ with compact support in some ball of radius $\approx L$ and $|\nabla \eta_L | \leq C/L$, $|\nabla^2 \eta_L| \leq C e^{CL}$ and then let $L \to \infty$.
\end{enumerate}

%

\bibliography{bibliography}{}

\newcommand{\etalchar}[1]{$^{#1}$}
\providecommand{\bysame}{\leavevmode\hbox to3em{\hrulefill}\thinspace}
\providecommand{\MR}{\relax\ifhmode\unskip\space\fi MR }
\providecommand{\MRhref}[2]{%
  \href{http://www.ams.org/mathscinet-getitem?mr=#1}{#2}
}
\providecommand{\href}[2]{#2}
\begin{thebibliography}{CCG{\etalchar{+}}10}

\bibitem[App17]{Appleton:2017xy}
Alexander Appleton, \emph{A family of non-collapsed steady ricci solitons in
  even dimensions greater or equal to four},
  https://arxiv.org/pdf/1708.00161.pdf (2017).

\bibitem[Bam20a]{Bamler_RF_compactness}
Richard~H. Bamler, \emph{{Compactness theory of the space of super Ricci
  flows}}, https://arxiv.org/abs/2008.09298 (2020).

\bibitem[Bam20b]{Bamler_HK_entropy_estimates}
\bysame, \emph{{Entropy and heat kernel bounds on a Ricci flow background}},
  https://arxiv.org/abs/2008.07093 (2020).

\bibitem[Bam20c]{Bamler_HK_RF_partial_regularity}
\bysame, \emph{{Structure theory of non-collapsed limits of Ricci flows}},
  https://arxiv.org/abs/2009.03243 (2020).

\bibitem[BDS20]{Brendle_Daskalopoulos_Sesum_2020}
Simon Brendle, Panagiota Daskalopoulos, and Natasa Sesum, \emph{Uniqueness of
  compact ancient solutions to three-dimensional ricci flow},
  https://arxiv.org/abs/2002.12240 (2020).

\bibitem[Bre20]{Brendle_2020_Bryant}
Simon Brendle, \emph{Ancient solutions to the {R}icci flow in dimension 3},
  Acta Math. \textbf{225} (2020), no.~1, 1--102. \MR{4176064}

\bibitem[CCG{\etalchar{+}}08]{Chow_book_series_Part_II}
Bennett Chow, Sun-Chin Chu, David Glickenstein, Christine Guenther, James
  Isenberg, Tom Ivey, Dan Knopf, Peng Lu, Feng Luo, and Lei Ni, \emph{The
  {R}icci flow: techniques and applications. {P}art {II}}, Mathematical Surveys
  and Monographs, vol. 144, American Mathematical Society, Providence, RI,
  2008, Analytic aspects. \MR{2365237}

\bibitem[CCG{\etalchar{+}}10]{Chow_book_series_Part_III}
\bysame, \emph{The {R}icci flow: techniques and applications. {P}art {III}.
  {G}eometric-analytic aspects}, Mathematical Surveys and Monographs, vol. 163,
  American Mathematical Society, Providence, RI, 2010. \MR{2604955}

\bibitem[CG91]{Cheeger_Gromov_Chopping_RM}
Jeff Cheeger and Mikhael Gromov, \emph{Chopping {R}iemannian manifolds},
  Differential geometry, Pitman Monogr. Surveys Pure Appl. Math., vol.~52,
  Longman Sci. Tech., Harlow, 1991, pp.~85--94. \MR{1173034}

\bibitem[CMZ21a]{Chan_Ma_Zhang_2021_ancient}
Pak-Yeung Chan, Zilu Ma, and Yongjia Zhang, \emph{Ancient ricci flows with
  asymptotic solitons}, https://arxiv.org/abs/2106.06904v1 (2021).

\bibitem[CMZ21b]{Chan_Ma_Zhang_2021_sobolev}
\bysame, \emph{A uniform sobolev inequality for ancient ricci flows with
  bounded nash entropy}, https://arxiv.org/abs/2107.01419 (2021).

\bibitem[Ham95]{Hamilton_RF_compactness}
Richard~S. Hamilton, \emph{A compactness property for solutions of the {R}icci
  flow}, Amer. J. Math. \textbf{117} (1995), no.~3, 545--572. \MR{1333936}

\bibitem[MZ21]{Ma_Zhang_2021_ancient}
Zilu Ma and Yongjia Zhang, \emph{Perelman's entropy on ancient ricci flows},
  https://arxiv.org/abs/2101.01233 (2021).

\bibitem[Ni06]{Ni_2006_note_Harnack}
Lei Ni, \emph{A note on {P}erelman's {LYH}-type inequality}, Comm. Anal. Geom.
  \textbf{14} (2006), no.~5, 883--905. \MR{2287149}

\bibitem[Per02]{Perelman1}
G.~Perelman, \emph{{The entropy formula for the Ricci flow and its geometric
  applications}}, http://arxiv.org/abs/math/0211159 (2002).

\bibitem[Per03]{Perelman2}
\bysame, \emph{{Ricci flow with surgery on three-manifolds}},
  http://arxiv.org/abs/math/0303109 (2003).

\end{thebibliography}
\bibliographystyle{amsalpha}

\end{document}